\title{Maker-Breaker-Crossing-Game on the Triangular Grid-graph}
\author{Freddie Wallwork}
\affil{The University of Warwick}
\theoremstyle{definition} 
\newtheorem{defn}{Definition}[section]
\theoremstyle{plain} 
\newtheorem{thm}{Theorem}[section] 
\newtheorem{lem}[thm]{Lemma}
\newtheorem*{remark}{Remark}
\begin{document}   
	\maketitle
	
	\begin{abstract}
		We study the $(p,q)$-Maker Breaker Crossing game introduced by Day and Falgas Ravry in 'Maker-Breaker percolation games I: crossing grids'. The game described in their paper involves two players Maker and Breaker who take turns claiming $p$ and $q$ as yet unclaimed edges of the graph respectively. Maker aims to make a horizontal path from a leftmost vertex to a rightmost vertex and Breaker aims to prevent this. The game is a version of the more general Shannon switching game and is played on a square grid graph. 
		
		We consider the same game played on the triangular grid graph  $\Delta_{m,n}$ ($m$ vertices across, $n$ vertices high) and aim to find, for given $(p,q,m,n)$, a winning strategy for Maker or Breaker.
		
		We establish using a similar strategy to that used by Day and Falgas Ravry to show that:
		\begin{itemize}
			\item for sufficiently tall grids and $p \geq q$ Maker has a winning strategy for the $(p,q)$-crossing game on $\Delta_{m,n}$.
			\item for sufficiently wide grids and $4p\leq q$, Breaker has a winning strategy for the $(p,q)$-crossing game on $\Delta_{m,n}$.
		\end{itemize}
		We will prove these by introducing a new type of game called a \emph{secure game} in which Maker must ensure the grid is in a specific 'secure' state at the end of her turn.
	\end{abstract}

	\section{Introduction}
	We consider the Maker-Breaker crossing game as described by Day and Falgas-Ravry on a triangular lattice grid as opposed to the square lattice grid. The results are obtained 
	through a method derived from their paper\cite{day2021makerI}.
	
	The results obtained from paper\cite{day2021makerI} are applied in Day and Falgas-Ravry's follow up paper \cite{day2021makerII} to prove results on the $(p,q)$-percolation game on the infinite square grid graph in which Breaker wins if they can completely enclose the central vertex in a dual cycle. 
	
	The proof for one of their results, that Breaker wins the $(p,q)$-percolation game for $q\geq 2p$ on the square lattice, involves building concentric square annuli which are then split into 4 crossing games for the sides and 4 corner sections. The results from \cite{day2021makerI} show that Breaker can prevent any top bottom path (or in out path) on each side of these square annuli. Another strategy for a 'box-game' means for some annulus, Breaker can claim all edges connecting the corners sections of the annulus with the side sections, hence completely enclosing the centre and winning the game. A similar result could be proven for the $(p,q)$-percolation game on the triangular or hexagonal lattice for $q>p$ or $q\geq 6p$ respectively.
	
	The results may also apply to other combinatorial games played on triangular or hexagonal grid-graphs or lattices.
	\newline
	
	\begin{defn}[$(p,q)$-crossing game]
		The Maker-Breaker $(p,q)$-crossing game played on a grid graph $G(V,E)$  is a game played by Maker and Breaker who alternately claim $p$ and $q$ as yet unclaimed edges respectively on each turn. $G(V,E)$ has a set of leftmost and rightmost vertices specified, in the case of grid graphs it is the obvious leftmost and rightmost vertices. Maker's goal is to claim edges which form a path from a leftmost vertex to a rightmost vertex. Breaker's goal is to stop this from happening.
	\end{defn}
	We will use the notation $G'(V',E')$ for the dual graph of a planar graph $G$, whose vertices are the faces of $G$ and whose edges represent the adjacency of the faces of $G$. Hence each edge $e\in G$ has a respective dual edge $e' \in G'$. 
	\newline
	
	We carry over assumptions concerning behaviour of Maker and Breaker from this paper including:
	\begin{itemize}
		\item There exists a unique winner for all finite boards
		\item For a planar graph $G(V,E)$, Breaker's goal is equivalent to claiming all edges that are dual to the edges of a top bottom path on the dual graph $G'$ of G
		\item Maker never makes a cycle or arch (from left to left or right to right)
		\item Breaker never makes a dual-cycle or dual-arch (from top to top or bottom to bottom)
		\item Claiming an edge is never disadvantageous to either player 
	\end{itemize} 

	Something we cannot carry over from Day and Falgas-Ravry's paper is the grid being self dual as is the case with the square grid lattice $\Lambda_{n+1,n}$. The dual of the triangular grid graph $\Delta_{m,n}$ is in fact the hexagonal grid graph ($H_{n,m}$) which we will address in section 4. This makes determining an exact threshold for c in the $(p,cp)$-crossing game challenging. We know from our two main results that the value for c is somewhere between 1 and 4
	\begin{defn}
		We denote $\Delta_{m,n}$ as the graph formed by a triangular lattice with n rows with $m$ and $m-1$ vertices per row alternating upward. \\
		We number the rows upward from $1$ to $n$ and vertices are horizontally labelled similarly with 2 horizontally adjacent vertices labelled a difference of $2$ from each other.\\
	\end{defn}
	
\begin{figure}[h]
	\centering
	\includegraphics[scale=0.5]{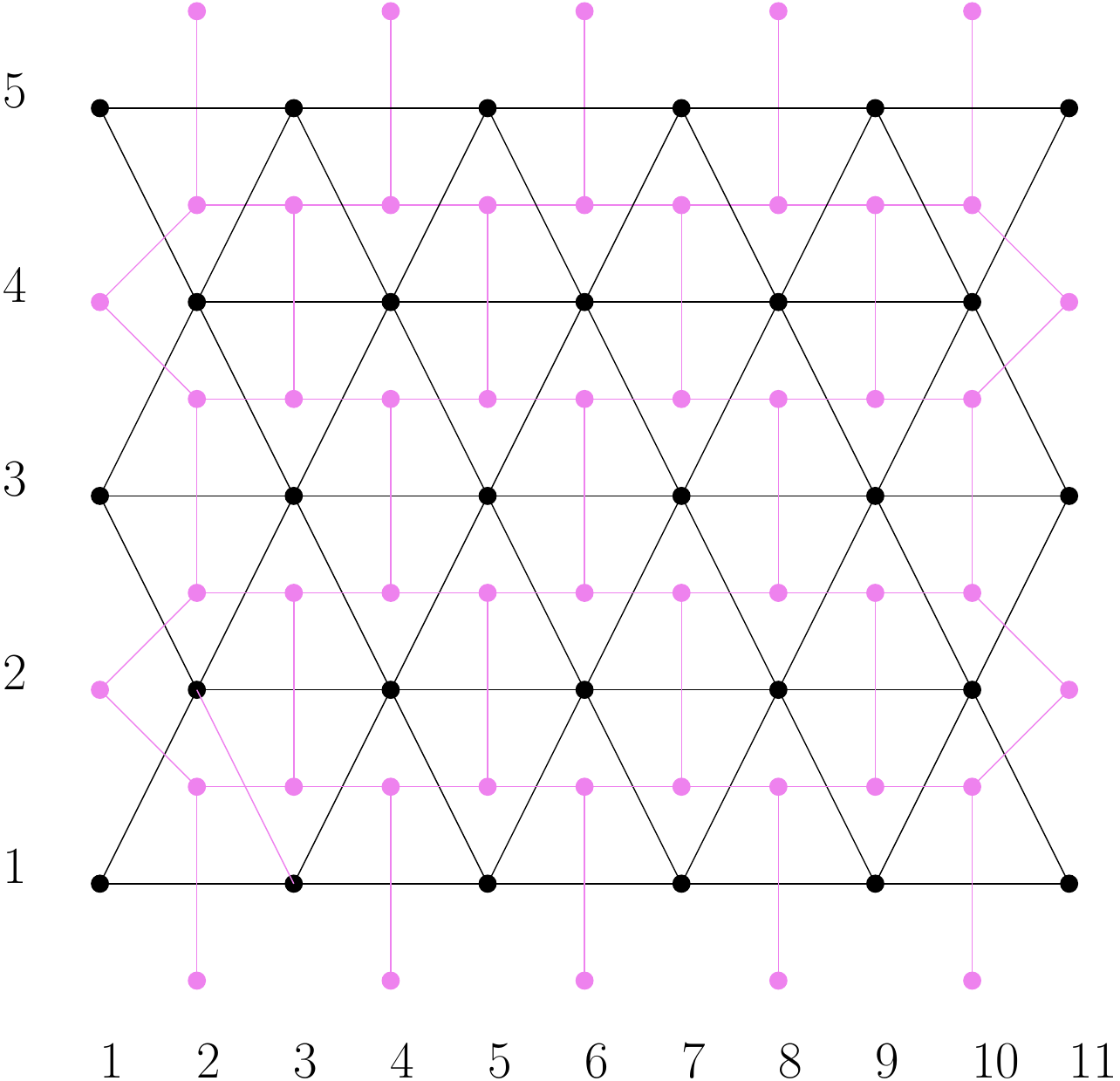}
	\caption{The grid $\Delta_{6,5}$ in black and it's 'dual' in violet}
\end{figure}
	
We can observe that the \emph{dual} of this graph $\Delta'_{m,n}$ is the hexagonal lattice and Breakers objective of stopping Maker creating a left right crossing path is equivalent to creating a top bottom dual crossing path on this dual graph.

We will label the dual-vertices intuitively horizontally and give all vertices between standard vertex row $k$ and $k+1$ the vertical label $k+\frac{1}{2}$. (We will later define a 'dual vertex level' to distinguish dual-vertices representing the faces of upward and downward orientated triangles with the same vertical label.)

\begin{defn}[External boundary edges]
	We define the \emph{external boundary edges} of a connected component $C$ of Maker edges to be the unclaimed edges from the set of vertices in component $C$, $V(C)$, to the set of vertices not in $C$, $V(G) \backslash V(C)$. We denote this $\beta(C)$.
\end{defn}

\begin{lem}
	The number of edges in the external boundary of a connected component $C$ on the dual graph $\Delta'{m,n}$  is at most 3 more than the edges in $C$ .
	\[ |\beta(C)| \leq |E(C)| + 3 \] 
\end{lem}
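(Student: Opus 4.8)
The plan is to treat this as a pure counting statement about a connected subgraph of the hexagonal lattice $\Delta'_{m,n}$, exploiting two facts: that $\Delta'_{m,n}$ has maximum degree $3$, and that $C$ is acyclic. The latter is immediate from our standing assumptions that neither player completes a cycle in their own (dual) graph; since $C$ is connected, being a connected component, it is therefore a tree, and hence $|E(C)| = |V(C)| - 1$. This single identity is what converts the degree bound into the additive constant $3$ appearing in the claim.

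First I would set up a handshake count over the vertices of $C$. Partition the edges of $\Delta'_{m,n}$ incident to $V(C)$ into those with both endpoints in $V(C)$, call them internal, and those with exactly one endpoint in $V(C)$, call them crossing. Summing degrees gives
\[ \sum_{v \in V(C)} \deg(v) = 2 \cdot (\text{number of internal edges}) + (\text{number of crossing edges}). \]
Now $E(C)$ is contained in the internal edges, and $\beta(C)$ is contained in the crossing edges, the latter because a boundary edge leaves $V(C)$ by definition and $\beta(C)$ retains only the unclaimed such edges. Since the hexagonal lattice has every vertex of degree at most $3$, the left-hand side is at most $3|V(C)|$, so
\[ 2|E(C)| + |\beta(C)| \leq 3|V(C)|. \]

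Rearranging and substituting $|V(C)| = |E(C)| + 1$ then yields $|\beta(C)| \leq 3|V(C)| - 2|E(C)| = |E(C)| + 3$, as required. The only genuinely delicate points are bookkeeping rather than difficulty: I must check that the internal edges not lying in $E(C)$ and the crossing edges that are already claimed, and hence excluded from $\beta(C)$, only ever slacken the inequality in the favourable direction, and that the maximum degree of the hexagonal lattice really is $3$, including at boundary vertices where it is at most $3$. If one prefers to avoid the handshake, the same bound follows by induction on $|V(C)|$: the base case is a single vertex with $|\beta(C)| \leq 3 = |E(C)| + 3$, and attaching each new vertex through a single tree edge raises $|E(C)|$ by $1$ while changing $|\beta(C)|$ by at most $+1$, since the new vertex contributes at most two new crossing edges but the connecting edge ceases to be a boundary edge. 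I expect the handshake argument to be the cleaner and less error-prone of the two, so I would present it as the main line.
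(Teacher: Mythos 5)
Your proof is correct, but your main line is a genuinely different argument from the paper's. The paper proceeds by induction on $|E(C)|$: the base case is a single dual edge (at most $4$ boundary edges), and the inductive step deletes either a cycle edge (which changes nothing in $\beta$) or a pendant edge (which adds at most $2$ crossing edges while consuming the connecting one) --- essentially the induction you sketch as your fallback. Your handshake count, $2|E(C)| + |\beta(C)| \leq \sum_{v \in V(C)} \deg(v) \leq 3|V(C)|$, is cleaner: it isolates the two ingredients (maximum degree $3$ of the hexagonal dual, and $|V(C)| \leq |E(C)|+1$ for a connected graph) and generalizes at once to any lattice of maximum degree $d$, giving $|\beta(C)| \leq (d-2)|E(C)| + d$ --- which is exactly the form needed later when the roles are reversed on the hexagonal grid. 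One small improvement: you do not need acyclicity at all. You invoke the standing assumption that the relevant player never closes a dual cycle to get $|E(C)| = |V(C)|-1$, but every connected graph satisfies $|V(C)| \leq |E(C)|+1$, and that inequality already yields $|\beta(C)| \leq 3|V(C)| - 2|E(C)| \leq |E(C)|+3$. Dropping the tree hypothesis makes your version match the paper's, whose induction explicitly covers components containing cycles. Your bookkeeping remarks (internal non-component edges and already-claimed crossing edges only slacken the bound) are exactly the right things to check and are all in the favourable direction, as you say.
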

\begin{proof}
	Proof is proven easily by induction
	
	A single dual edge has at most 4 external boundary edges.\\
	Consider a dual component of $n$ edges. If there is a cycle in the dual component then there exists a dual edge $e$ such that $C\backslash e$ is a connected component of $n-1$ dual edges. Adding $e$ back in makes no difference to the set of external boundary edges. So by inductive hypothesis $|\beta(C)| \leq (n-1) + 3 \leq n+3$.
	
	If there is no cycle in $C$ then the component is a tree. Let $e$ be adjacent to a leaf of C. $C \backslash e$ is a tree of $n-1$ edges. By inductive hypothesis  $|\beta(C\backslash e)| \leq (n-1) + 3$. Adding $e$ adds at most 2 dual edges to the set of external boundary edges but removes $e$. Hence, $|\beta(C)| \leq n+3$.
\end{proof}

We see from this lemma that with an allowance of 3 edges per component a component of Breaker edges could be 'contained' by Maker. Hence given a sufficient strategy to manage these 3 edges a winning strategy for Maker would follow for the $(p,q)$-crossing game when $p \geq q$.
	
The following is the first main result we aim to prove.
	
\begin{thm}
	Maker has a winning strategy for the $(p,q)$-crossing-game on the triangular grid graph $\Delta_{m,n}$ for $p\geq q$ and $n\geq q+2$
\end{thm}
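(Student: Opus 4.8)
The plan is to pass to the hexagonal dual $\Delta'_{m,n}$ and use the stated duality: Maker wins the crossing game exactly when Breaker never builds a path from the top boundary to the bottom boundary out of his dual edges, and since the board is finite and admits a unique winner, it suffices to exhibit a Maker strategy that forever prevents any one connected component of Breaker's dual edges from meeting both the top and the bottom. I would realise this through \emph{containment}: around each Breaker dual component $C$ Maker claims (the primal edges dual to) a barrier of external-boundary edges $\beta(C)$ that seals $C$ off from whichever of the two boundaries it does not already touch. The boundary lemma $|\beta(C)| \le |E(C)| + 3$ is the key accounting tool, since a component costing Breaker $|E(C)|$ edges admits a sealing barrier of at most $|E(C)| + 3$ edges; Maker matches the $|E(C)|$ term edge-for-edge precisely because $p \ge q$, and the entire difficulty is compressed into the additive constant $+3$ per component.

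I would package this in the \emph{secure game}: an auxiliary game in which Maker's only obligation, at the end of each of her turns, is to leave the board in a \emph{secure} state, meaning that every Breaker dual component is contained by such a barrier and carries a three-edge \emph{reserve} of as-yet-unmatched boundary edges. First I would prove a transfer lemma: any strategy that restores security after every Maker turn is a winning strategy for the crossing game, because a family of contained components can never grow into a top-to-bottom crossing, whence uniqueness of the winner forces Maker to win. The hypothesis $n \ge q+2$ is used here to guarantee that in the single turn separating two secure positions Breaker cannot complete a dual path long enough to span the grid --- the shortest top-bottom dual crossing is longer than any $q$ edges he may claim --- so Maker always retains the tempo to re-secure before a breakthrough.

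The core is then the inductive step that Maker can always restore security. After Breaker plays his $q$ edges I would classify each by its effect on the contained components. An edge that extends an existing component enlarges its sealing barrier by at most one, by the incremental form of the boundary lemma, and is answered by a single Maker edge; an edge that starts a fresh component draws a single edge of reserve (one Maker edge seals a lone new dual edge down to its three-edge allowance); and an edge that merges two components lowers the component count, releasing a full three-edge reserve that pays for re-sealing the merged component. Running this as a potential-function argument, with the potential charging the cumulative $+3$ overhead against the number of components, shows that Maker's per-turn response never exceeds her budget $p \ge q$.

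The step I expect to be the main obstacle is exactly this management of the $+3$ per component: I must show the reserve is genuinely self-sustaining --- replenished by merges, and never drained faster than $p \ge q$ permits even when Breaker deliberately scatters many small components --- rather than accumulating without bound. Closely tied to this is the geometric content hidden inside the word ``seals'': verifying that a barrier of cost at most $|E(C)| + 3$ can always be positioned to cut $C$ off from the boundary it has not yet reached, which becomes delicate in the top and bottom rows where the bound $|\beta(C)| \le |E(C)|+3$ is tight and where the two dual-vertex levels (the faces of upward- and downward-oriented triangles sharing a vertical label) must be tracked separately to decide which boundary edges actually threaten a top-bottom connection.
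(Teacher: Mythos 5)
Your architecture coincides with the paper's: a ``secure game'' in which Maker's only obligation is to restore a containment invariant after each move, the boundary lemma $|\beta(C)|\le|E(C)|+3$ as the accounting tool, the hypothesis $n\ge q+2$ used to show Breaker cannot complete a crossing in the single turn between secure positions, and a final transfer of the secure-game strategy to the crossing game. But the proposal has a genuine gap, and it is precisely the part you flag as ``the main obstacle'': the paper's proof \emph{is} the geometric verification you defer. The invariant is not an abstract ``three-edge reserve'' but a concrete one --- each floating component must be enclosed by a claimed blue path $P$ together with one of eight explicitly listed three-edge \emph{brackets} of merely non-red edges, and each top or bottom component by a blue path plus a single non-red \emph{gate} edge at a prescribed dual-vertex level. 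The entire secure-game lemma is then an exhaustive case analysis: for every way Breaker's edge can hit the interior, the blue path, a bracket edge, or a gate, and for every pair of bracket types that can share an edge when two components merge, one must exhibit a specific re-securing move and name the resulting bracket type. Your potential-function bookkeeping shows the \emph{budget} suffices, but not that a legal re-securing move \emph{exists} in every configuration; that existence claim is the theorem's content and cannot be obtained from the counting alone.

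Two further steps are glossed over in ways that would fail as written. First, your justification of the one-turn-safety lemma (``the shortest top--bottom dual crossing is longer than any $q$ edges'') is not correct on its own: Breaker can shortcut through existing secured components at no edge cost, so one must bound how many dual-vertex levels a single new edge can gain when it enters and exits a secured component through its bracket. The paper proves $v_k^{+}\le 2k+1$ by checking that the worst bracket (Type 3) allows a gain of at most $2$ levels per traversal; without that bracket-by-bracket bound the lemma does not follow. Second, the transfer from the one-edge-at-a-time secure game to the $q$-edges-at-a-time response game is not automatic: Breaker's batch of $q$ edges may, in some orders, violate the secure game's rule forbidding him from converting a secured floating component into a top or bottom component by taking a blue-path edge. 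The paper resolves this by ordering Breaker's edges within each top or bottom component by graphical distance to the root and showing a violation would contradict the edges' availability; your ``transfer lemma'' would need this argument or an equivalent.
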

Since it is never disadvantageous for either player to claim a free edge, we only need to prove this for the case $p=q$.

\section{The $q$-response game, Brackets and Security}
We define a more general variant of the $(p,p)$-crossing game between Maker and Breaker.
\begin{defn}{$q$-response game}
	The $q$-response game is played on $\Delta_{\infty,n}$ by players Horizontal ($\mathcal{H}$) and Vertical ($\mathcal{V}$). 
	On each turn $t$, $\mathcal{V}$ picks an integer $r_t \in [q]$ and claims $r_t$ unclaimed edges in $\Delta_{\infty,n}$ marking them red. $\mathcal{H}$ responds to each of $\mathcal{V}$ turns by also claiming $r_t$ unclaimed edges, marking them blue. 
	The goals for Vertical ($\mathcal{V}$) is equivalent to that of Breaker. Horizontal ($\mathcal{H}$) wins if she can prevent $\mathcal{V}$ from getting a top-bottom path.
\end{defn}

The (q,q)-crossing game is the special case of the response game where $\mathcal{V}$ chooses $r_t=q$ for all $t$ and $\mathcal{H}$ forfeits her first turn. Therefore note, any winning strategy for $\mathcal{H}$ on the $q$-response game is also a winning strategy for Maker in the $(p,p)$-crossing game where Maker may make an arbitrary first move.
	
\begin{defn}
	We refer to a maximal set of $2$ or more dual-vertices pairwise connected by a path of red edges as a:
	\begin{itemize}
		\item \emph{top component} if it has at least 1 vertex from the top of the grid
		\item \emph{bottom component} if it has at least 1 vertex from the bottom of the grid
		\item \emph{floating component} otherwise			
	\end{itemize}
\end{defn}

\begin{defn}[Brackets]
	We define Brackets as $3$ edge long paths from vertex $(x,y)$ in the following ways:
	
	\begin{description}
		\item[Type 1] the edges $\{ (x-0.5,y-0.5) , (x,y-1) , (x+1.5, y-0.5)\}$ form a bracket of Type 1 if none are red.\\
		The interior dual-vertices of this bracket are defined as $\{ (x,y-0.5), (x+1,y-0.5)\}$.
		
		\item[Type 2] the edges $\{ (x+0.5,y-0.5) , (x+2,y-1) , (x+2.5, y-0.5)\}$ form a bracket of Type 2 if none are red.\\
		The interior dual-vertices of this bracket are defined as $\{ (x+1,y-0.5), (x+2,y-0.5)\}$.
		
		\item[Type 3] the edges $\{ (x+0.5,y-0.5) , (x+1.5,y-0.5) , (x+1.5, y+0.5)\}$ form a bracket of Type 3 if none are red.
		The interior dual-vertices of this bracket are defined as $\{ (x+1,y-0.5), (x+1,y+0.5)\}$.
		\begin{figure}[h]
			\centering
			\includegraphics[scale=0.6]{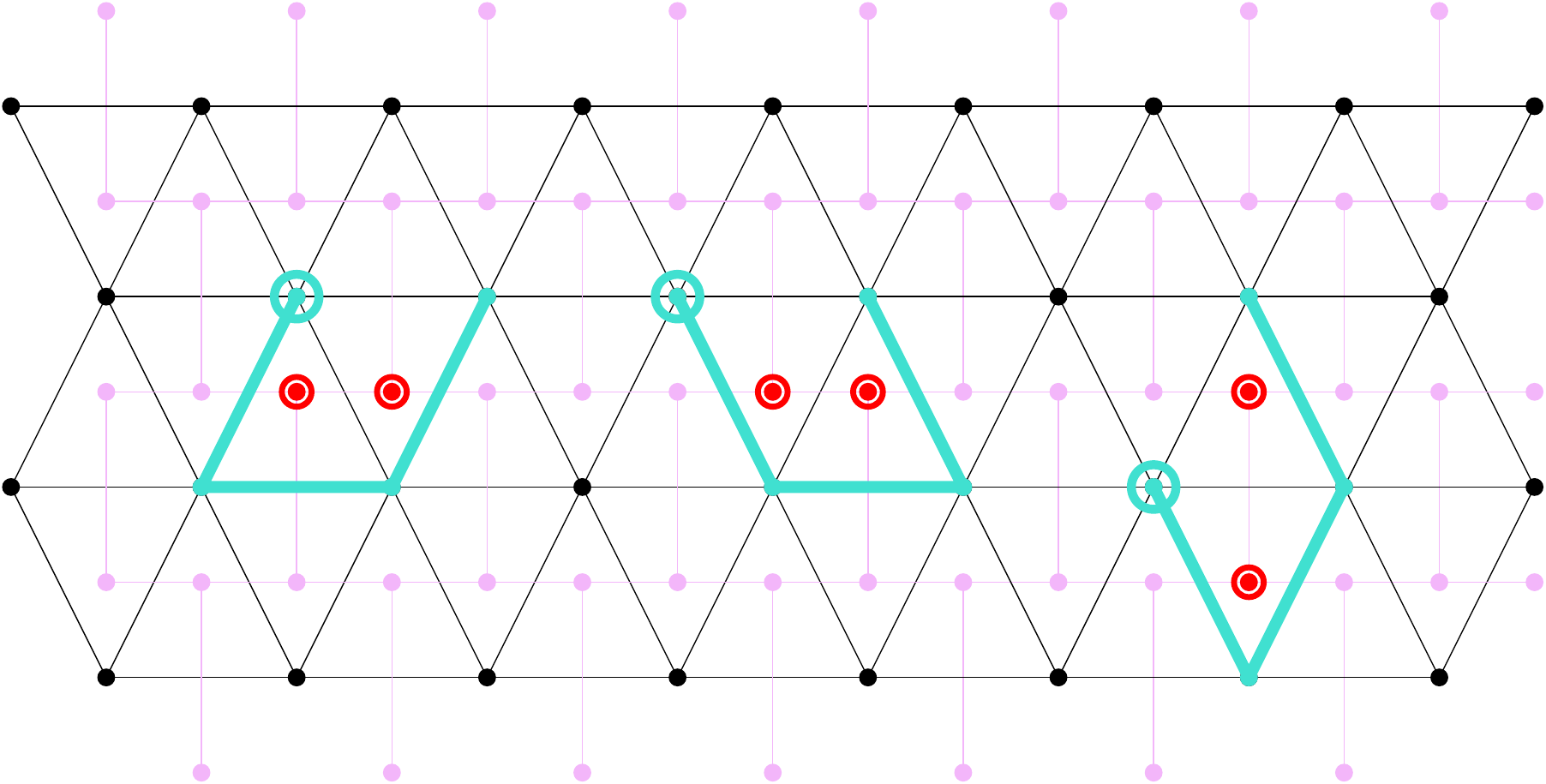}
			\caption{Bracket Types 1, 2 \& 3 with $(x,y)$ marked with turquoise circle and interior dual-vertices in red}
		\end{figure}
			
		\item[Type 4] the edges $\{ (x+0.5,y-0.5) , (x+2,y-1) , (x+3.5, y-0.5)\}$ form a bracket of Type 4 if none are red.
		The interior dual-vertices of this bracket are defined as $\{ (x+1,y-0.5), (x+2,y-0.5) , (x+3,y-0.5)\}$.
			
		\item[Type 5] the edges $\{ (x+1,y) , (x+3,y) , (x+3.5, y+0.5)\}$ form a bracket of Type 5 if none are red.
		The interior dual-vertices of this bracket are defined as $\{ (x+1,y+0.5), (x+2,y+0.5) , (x+3,0.5)\}$.
		
		\item[Type 6] the edges $\{ (x+1,y) , (x+3,y) , (x+4.5, y+0.5)\}$ form a bracket of Type 6 if none are red.
		The interior dual-vertices of this bracket are defined as $\{ (x+1,y+0.5), (x+3,y+0.5) , (x+4,y+0.5)\}$.
		
		\begin{figure}[h]
			\includegraphics[scale=0.6]{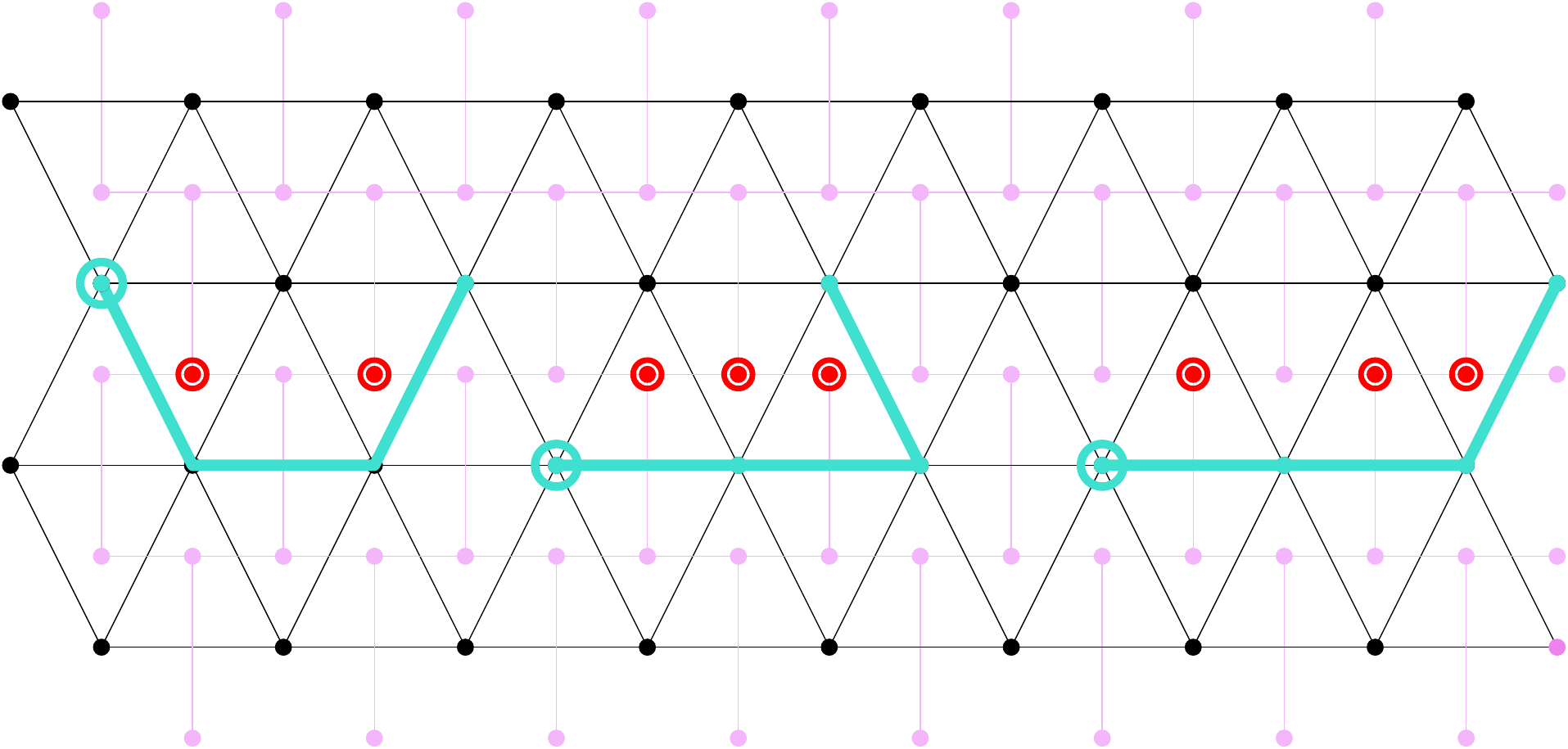}
			\caption{Bracket Types 4, 5 \& 6 with $(x,y)$ marked with turquoise circle and interior dual-vertices in red}
		\end{figure}
			
		\item[Type 7] the edges $\{ (x+0.5,y-0.5) , (x+1.5,y-0.5) , (x+2.5, y+0.5)\}$ form a bracket of Type 7 if none are red.
		The interior dual-vertices of this bracket are defined as $\{ (x+1,y-0.5), (x+1,y+0.5), (x+2, Y+0.5)\}$.
		
		\item[Type 8] the edges $\{ (x+0.5,y-0.5) , (x+1.5,y-0.5) , (x+3, y)\}$ form a bracket of Type 8 if none are red.
		The interior dual-vertices of this bracket are defined as $\{ (x+1,y-0.5), (x+1,y+0.5), (x+2, y+0.5)\}$.
		\begin{figure}[h]
			\includegraphics[scale=0.6]{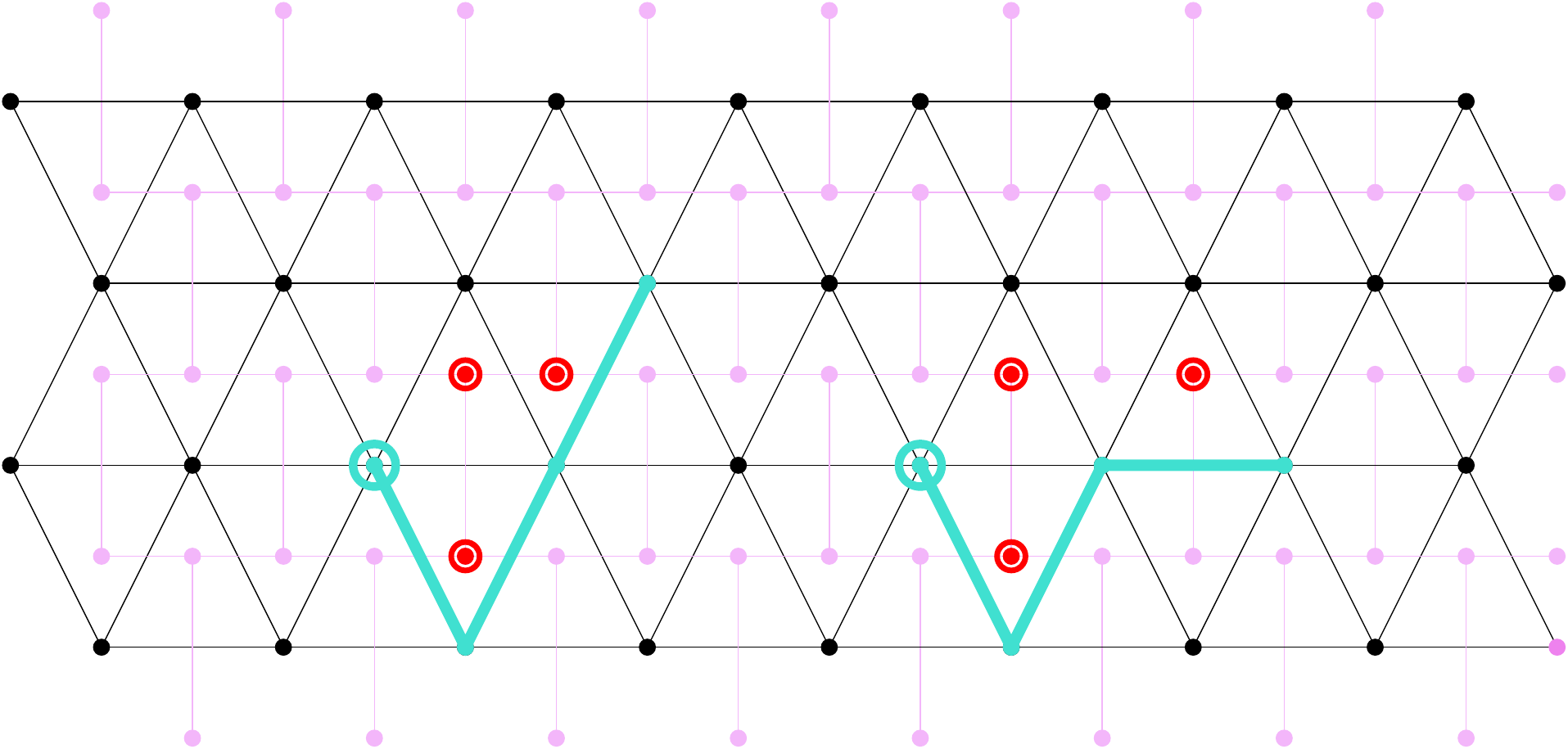}
			\caption{Bracket Types 7 \& 8 with $(x,y)$ marked with turquoise circle and interior dual-vertices in red}
		\end{figure}
	\end{description}
\end{defn}

\begin{defn}[Secure floating component]
	We say a floating component $C$ is a secure floating component secured by a blue path $P$ and bracket $B$ if:
	\begin{enumerate}[(i)]
		\item The endpoints of $B$ and $P$ are the same
		\item The interior dual-vertices of the bracket $B$ is the vertex set of $C$
		\item $C$ is in the the interior of the cycle formed by $P\cup B$
		\item The dual $e*$ of each edge $e$ of $P$ is adjacent to a vertex of $C$ 
	\end{enumerate}
\end{defn}
	
If $C$ is a top (/bottom) component it has a unique top (/bottom) dual-vertex $v*$ (since $\mathcal{V}$ never makes a dual-arch).
\begin{defn}[Secure top component]
	We say a top component $C$ with top dual-vertex $(x,n+0.5)$ is a secure top component secured by a non-red edge $g=(x',n-0.5)$ with $x'>x$ and a blue path $P$ from $(x-1,n)$ to the lower endpoint of $g$  if:
	\begin{enumerate}[(i)]
		\item $C$ is in the the interior of the arch $P\cup \{g\}$
		\item The dual $e*$ of each edge $e$ of $P$ is adjacent to a vertex of $C$ 
	\end{enumerate}
\end{defn}

\begin{figure}[h]
	\includegraphics[scale=0.6]{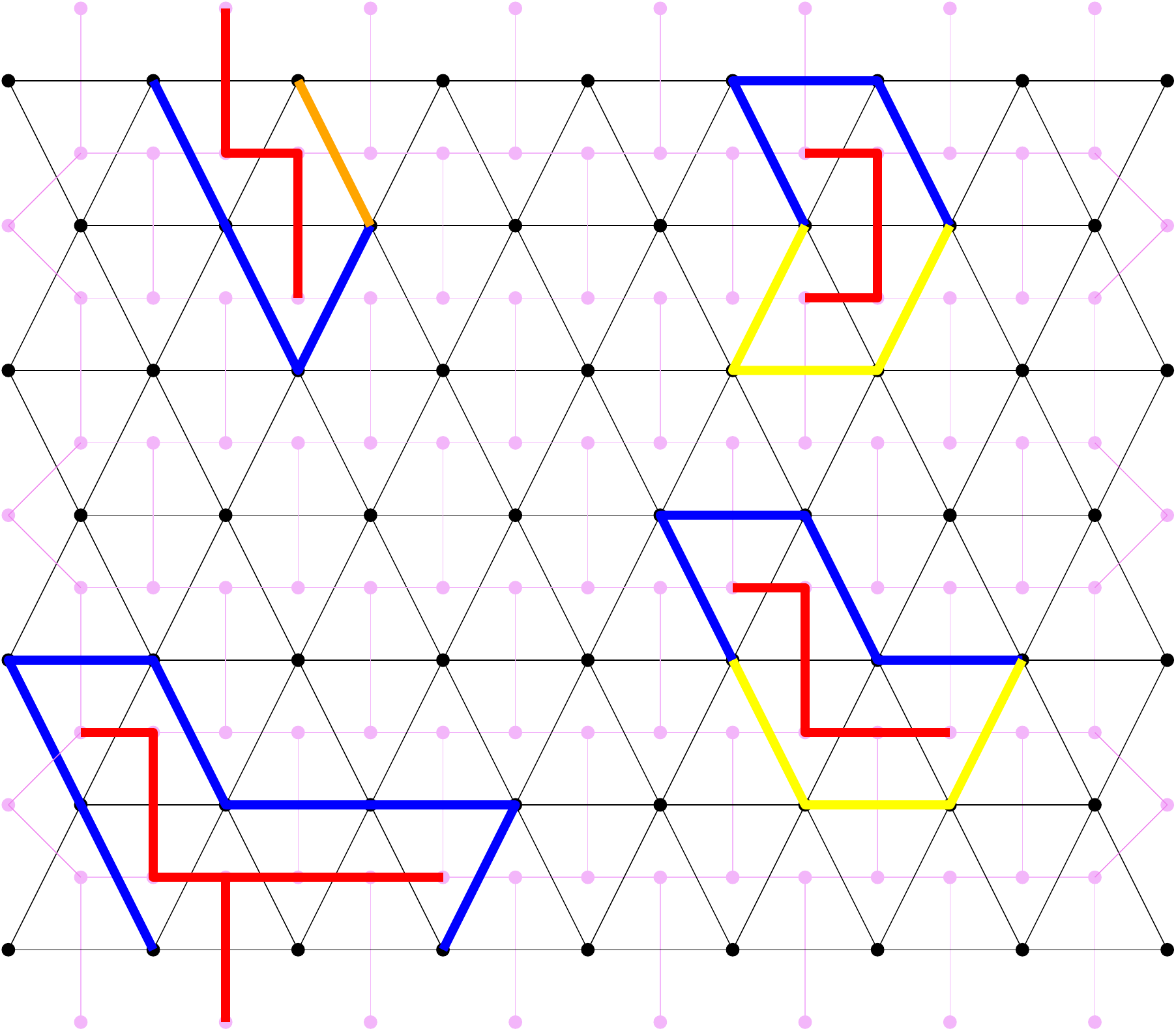}
	\caption{from top left clockwise: A secure top component secured by blue path and orange 'gate', a secure floating component secured by blue path and Type 1 bracket, a secure floating component secured by blue path and Type 3 bracket and an extra secure bottom component secured by blue path}
\end{figure}

\begin{defn}[Secure bottom component]
	We say a bottom component $C$ with bottom dual-vertex $(x,-0.5)$ is a secure top component secured by a non-red edge $g=(x',1.5)$ with $x'>x$ and a blue path $P$ from $(x-1,1)$ to the upper endpoint of $g$  if:
	\begin{enumerate}[(i)]
		\item $C$ is in the the interior of the arch $P\cup \{g\}$
		\item The dual $e*$ of each edge $e$ of $P$ is adjacent to a vertex of $C$
	\end{enumerate}
\end{defn}

We refer to the edge $g$ in the definition of Secure top/bottom component as the 'gate' and $g-(0.5,0)$ the interior dual-vertex. If $g$ is also a blue edge then the component is 'extra secure'.\\

For the dual graph $\Delta'_{\infty,n}$ we label the vertical 'dual vertex level' of $v \in V(\Delta'_{m,n})$ intuitively from bottom to top: $1$ to $2n$. With the dual-vertex at the centre of each upward pointing triangle with base on the $k^{th}$ line being on dual vertex level $2k$ and the dual-vertex at the centre of a downward pointing triangle with point on the $k^{th}$ line being on dual vertex level $2k+1$. See

\begin{figure}[h]
	\includegraphics[scale=0.6]{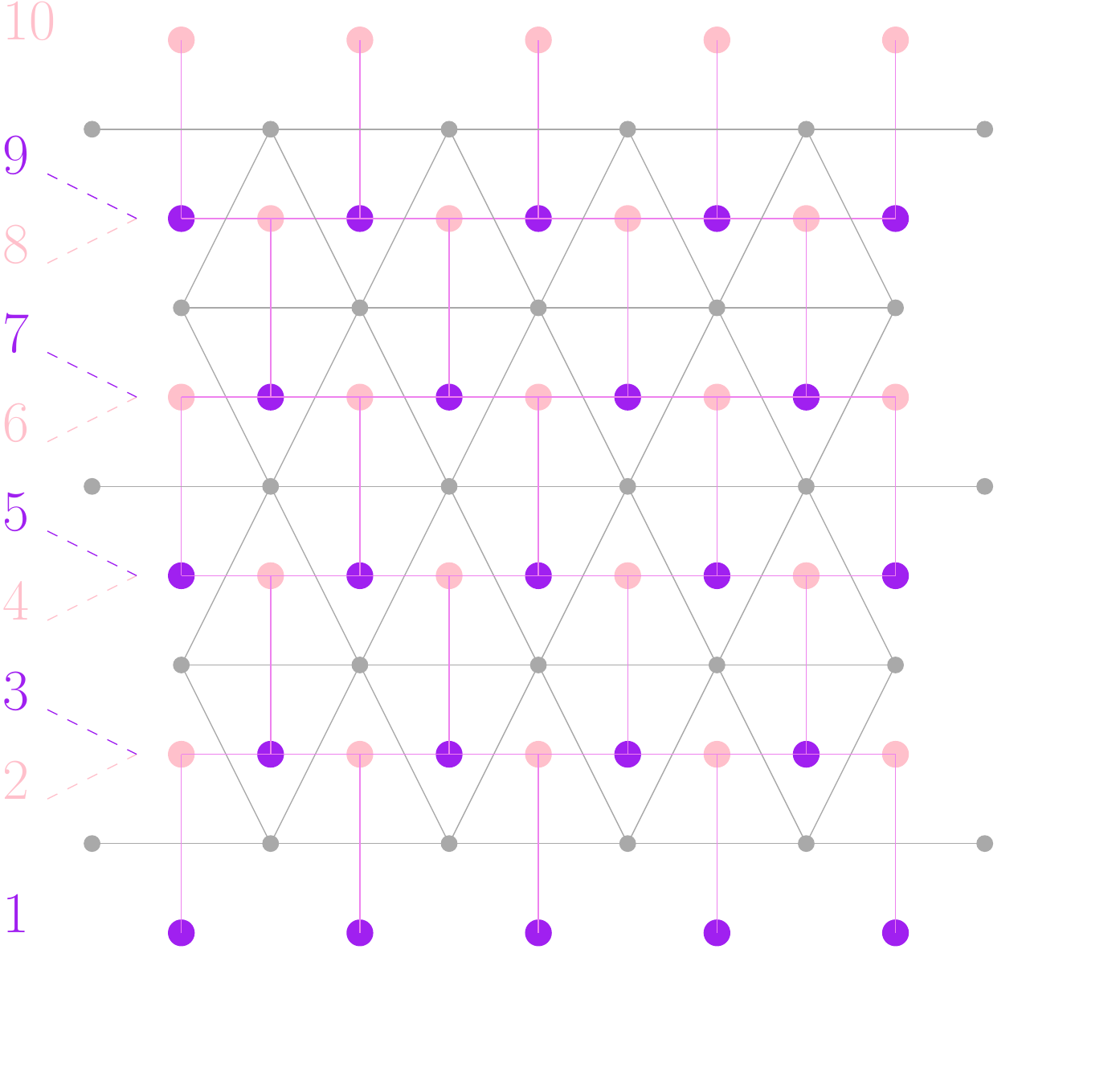}
	\caption{a triangular grid graph with annotated dual vertex levels}
\end{figure}

\begin{lem}
	let $n\geq q+2$. If the grid is secure at the start of $\mathcal{V}$'s turn in the $q$-response game played on $\Delta_{\infty,n}$, then $\mathcal{V}$ cannot win in a single turn.
\end{lem}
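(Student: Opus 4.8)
The plan is to argue by contradiction: suppose that, starting from a secure configuration, $\mathcal{V}$ claims a set $S$ of at most $q$ red edges on her turn and thereby completes a red top--bottom path $R$ in the dual graph $\Delta'_{\infty,n}$. Recording vertical position by the vertical labels $0.5, 1.5, \dots, n+0.5$ of the dual-vertices, the two endpoints of $R$ lie at labels $n+0.5$ (top) and $0.5$ (bottom), so $R$ must cross each of the $n$ consecutive label-gaps at least once. I will show that the secure structure forces all but two of these crossings to be performed by fresh edges of $S$, which is impossible once $n-2 \geq q$.

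The first ingredient is the planar-duality fact that a red (dual) path cannot cross a blue path: such a crossing would require a red dual edge to be the dual of a blue primal edge, which is forbidden since each edge is claimed by at most one player. Thus every blue path appearing in a secure component's certificate acts as an impenetrable barrier for $R$. I then use the security hypothesis to localise the old red components. By definition a secure floating component $C$ is trapped inside the closed curve $P\cup B$ whose only non-blue part is the bracket $B$; since $R$ cannot cross the blue arc $P$, it can only meet $C$ from the bracket side. Consequently $R$ cannot enter $C$ from below and leave it above (or vice versa), because one of the two vertical sides of $C$ is sealed by $P$. Hence $C$ admits no vertical through-passage and contributes a free crossing to no label-gap. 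Similarly, a secure top (resp.\ bottom) component is trapped under the arch $P\cup\{g\}$ with the gate $g$ pinned at label $n-0.5$ (resp.\ $1.5$); it is joined to the top (resp.\ bottom) but, being sealed by $P$ down to the gate, can supply a free crossing only for the topmost (resp.\ bottommost) label-gap, and reaching any lower gap already forces $\mathcal{V}$ to claim the non-red gate edge, so $g\in S$.

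Combining these observations, every label-gap other than the topmost and the bottommost must be crossed by $R$ using an edge of $S$; and since a single dual edge changes the vertical label by at most one, it crosses at most one gap, so distinct middle gaps consume distinct edges of $S$. There are $n-2$ middle gaps, whence $|S|\geq n-2\geq q$. Tracking the additional edges needed to splice these crossings together through the gates into one connected top--bottom path pushes the count strictly above $q$, contradicting $|S|\leq q$ and proving the lemma.

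I expect the main obstacle to be the localisation step. The informal claim that one vertical side of a floating component is sealed must be upgraded to a rigorous statement that $R$ cannot traverse $C$ vertically, which requires checking across all eight bracket types that the blue path $P$ genuinely separates the two vertical sides of $C$ in the plane. The boundary components need the same care, so that the additive constant comes out exactly right and the counting inequality becomes strict precisely at the threshold $n=q+2$.
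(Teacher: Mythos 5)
Your overall plan---bound the vertical progress of the hypothetical crossing path and show $q$ new edges cannot span the height of the grid---is the same idea as the paper's, but your key localisation claim is false, and the counting built on it does not close.

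The problematic step is the assertion that a secure floating component ``admits no vertical through-passage and contributes a free crossing to no label-gap.'' This fails for brackets whose interior dual-vertices lie at \emph{two different} vertical labels, namely Types 3, 7 and 8. Take Type 3: the interior dual-vertices are $(x+1,y-0.5)$ and $(x+1,y+0.5)$, and the bracket edges $(x+0.5,y-0.5)$ and $(x+1.5,y+0.5)$ are adjacent to the lower and upper interior vertex respectively. So the red path may enter $C$ from below through the dual of the first bracket edge, ride the \emph{old} red edge of $C$ up one label-gap, and exit above through the dual of the third bracket edge. The blue path $P$ does not seal either ``vertical side'' here; the check you defer to (``across all eight bracket types'') is exactly where the claim breaks. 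Consequently floating components do supply free gap-crossings, your inequality $|S|\geq n-2$ is unjustified, and the argument collapses. The paper handles this correctly by \emph{quantifying} rather than forbidding the free progress: it uses the finer ``dual vertex level'' scale ($1$ to $2n$, two levels per label) and proves by induction that after $k$ new edges the path has reached level at most $2k+1$, the inductive step allowing a gain of up to $2$ levels precisely when the path passes through a floating component (Type 3 being the worst case). With $l\leq q$ this gives level at most $2q+1\leq 2n-3$, short of the level $2n-2$ needed to reach a top component's gate.

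A secondary issue: even if your localisation held, you would only obtain $|S|\geq n-2\geq q$, which is consistent with $|S|\leq q$; the claim that ``splicing'' forces a strict inequality is asserted but not proved, and the threshold $n=q+2$ is exactly where such slack matters.
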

\begin{proof}
	Let the grid be in a secure position before $\mathcal{V}$'s turn. Suppose he then claim $l$ edges $\{e_1,e_2,\dots,e_l\}$ to create a bottom top crossing dual path of red edges.
	
	Let the order of the edges be that of their appearance in the path from bottom to top. 
	We define $v^{-}_i$ and $v^{+}_i$ to be the 'dual vertex level' of the first and second vertex adjacent to $e_i$ in order of appearance in the bottom top path.
	
	We show by induction on $k$ that $v_k^{+}\leq 2k+1$\\
	For $k=1$ either $e_1$ is adjacent to a bottom dual-vertex (in which case $v^{+}_1=2$) or it meets a bottom component. As the bottom components is secure $e_1$ must lie across the gate. Hence $e_1=(x',1.5)$ and consequently  $v^{+}_1\leq 3$. Hence the statement holds for $k=1$
	
	Now let us suppose $v^{+}_k \leq 2k+1$ and consider the dual edge $e_{k+1}$.
	If $e_k$ and $e_{k+1}$ share a vertex, then since any 2 adjacent vertices are at most 1 dual vertex level apart, $v_{k+1}^{+}\leq v_k^{+}+1 \leq 2(k+1)$.
	If $e_k$ and $e_{k+1}$ do not share a vertex then there must be a secure floating component $C$ between them. If $C$ is secured by a bracket $B$, then consider the dual of each edge in $B$. These are the possible edges of  $e_k$ and $e_{k+1}$. We consider the greatest dual vertex level reached by any of these dual edges with respect to the lowest interior dual-vertex as this is the highest possible value for $v_{k+1}^{+}- v_k^{+}$.
	
	By observation, we can see that bracket type 3 has largest difference of 2 vertex levels. Therefore,
	\begin{align*}
		v_{k+1}^{+}- v_k^{+} \leq 2\\
		v_{k+1}^{+} \leq v_k^{+}+2\\
		v_{k+1}^{+} \leq 2k+3
	\end{align*}
	Hence, $v_{l}^{+}\leq 2l+1 \leq 2n-3$. Since the interior dual-vertex of the gate of a secure top component is at dual vertex level of at least $2n-2$, $e_l$ does not reach a top vertex nor top component. 
	
	Therefore $\mathcal{V}$ cannot win in a single turn from a secure grid.
\end{proof}
	
\section{The secure game}
	
\begin{defn}{Secure game}
	The Secure game is played by $\mathcal{H}$ and $\mathcal{V}$ on the graph $\Delta_{\infty,n}$. At any point an edge may be unclaimed, red, blue or a blue double edge (claimed by $\mathcal{H}$ twice).
	
	On each turn $\mathcal{V}$ claims ANY edge and makes it red, subject to the following conditions:
	\begin{enumerate}[(i)]
		\item $\mathcal{V}$ cannot claim an edge if doing so would create either a red cycle or a red arch (path from top dual-vertex to top dual-vertex or equivalently for bottom dual-vertice).
		\item $\mathcal{V}$ cannot claim an edge to connect a top component and bottom component
		\item If $C$ is a floating component and $P$ a blue path that 'secures' $C$, $\mathcal{V}$ cannot claim an edge of $P$ if doing so turns $C$ into a top or bottom component.
	\end{enumerate}
	Once $\mathcal{V}$ has claimed an edge $e$, $\mathcal{H}$ responds by claiming $1+b$ unclaimed edges, where $b$ is the number of blue edges replaced by $e$.
	
	At any stage the grid is secure if:
	\begin{enumerate}
		\item the grid is 'secure' with respect to the q-response game
		\item for distinct red components $C$ and $C'$ secured by blue paths $P$ and $P'$, any edge in $P \cap P'$ is a blue double edge.
	\end{enumerate}
	$\mathcal{H}$ wins if she can ensure the grid is secure at the end of each of their goes. $\mathcal{V}$ wins otherwise.
\end{defn}

\begin{lem}
	$\mathcal{H}$ has a winning strategy for for the secure game on $\Delta_{\infty,n}$ for $n\geq2$.
\end{lem}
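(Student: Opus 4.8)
The plan is to induct on the number of completed rounds, maintaining as an invariant that the grid is secure at the end of each of $\mathcal{H}$'s turns. The base case is vacuous, since before any edge is claimed there are no red components of size at least two and both conditions in the definition of a secure grid hold trivially. For the inductive step I would assume the grid is secure when $\mathcal{V}$ moves and produce, for each legal red edge $e$, a reply by $\mathcal{H}$ that spends exactly her budget of $1+b$ edges and leaves the grid secure. Because the single-move security lemma already rules out $\mathcal{V}$ completing a bottom--top path in one move from a secure grid, the task reduces to rebuilding the securing data --- brackets, gates and the blue hugging paths --- around whichever component $e$ disturbs; condition~1 of a secure grid then holds by construction and condition~2 is maintained by the double-edge accounting described below.

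I would organise the reply by how the two dual vertices $u,v$ incident to $e$ meet the current red components. The cases are: (a) $u$ and $v$ both free and away from the top and bottom rows, so $e$ creates a fresh two-vertex floating component, which $\mathcal{H}$ encloses by choosing a bracket of the appropriate orientation and claiming the short complementary blue path; (b) $e$ extends a single existing component $C$, whereupon $\mathcal{H}$ lengthens the hugging path of $C$ and, if necessary, switches to a larger bracket type; (c) $e$ merges two components $C_1,C_2$, whereupon $\mathcal{H}$ fuses their securing paths into one hug around the union; and (d) $e$ attaches a component to the top or bottom row, forcing $\mathcal{H}$ to re-secure it as a top or bottom component with a gate edge $g$ and an arch-shaped blue path. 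A final case (e) is that $e$ overwrites blue edges: conditions~(ii) and~(iii) on $\mathcal{V}$'s move guarantee this can neither connect top to bottom nor silently promote a floating component, and the extra $b$ edges in $\mathcal{H}$'s budget are exactly what she needs to reinstate the damaged hug, with a shared double edge contributing $b=2$ so that both affected components can be repaired.

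The reason the budget $1+b$ always suffices is an amortised count driven by the boundary lemma $|\beta(C)|\le |E(C)|+3$. The length of the hugging path $P$ securing a component $C$ is controlled by $|\beta(C)|$, so the one free edge in $\mathcal{H}$'s reply pays for the one-edge growth of the hug caused by the one new red edge, while the constant surplus of three boundary edges per component is absorbed once, by the initial bracket, and never needs to be re-paid. Overwrites return to $\mathcal{H}$, through the term $b$, precisely the investment $\mathcal{V}$ destroys, keeping the accounting in balance round by round. The double-edge rule is what makes condition~2 self-sustaining: whenever two hugs are forced to share an edge, $\mathcal{H}$ claims it twice, so a later red capture of that edge refunds enough budget to re-route both hugs.

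The step I expect to be the main obstacle is showing that a usable, entirely non-red bracket is always available to $\mathcal{H}$ at the moment she must (re)enclose a component; this is the purpose of the eight bracket types, which supply enough orientations and widths that $\mathcal{V}$'s red edges, constrained by conditions~(i)--(iii), can never block all admissible brackets at once. Verifying this needs a somewhat technical geometric check over the bracket types and the possible local red configurations, together with confirming in the merging case~(c) that the fused hug still hugs --- every dual edge of the new blue path adjacent to the union --- and that no two distinct hugs share an un-doubled edge. I would settle cases (a)--(d) to pin down the exact brackets and path edits, then isolate the non-red-bracket-existence statement as a separate sub-lemma so that the amortised budget argument above applies uniformly.
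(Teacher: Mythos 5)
Your skeleton matches the paper's: the proof is indeed a case analysis on how the two dual endpoints of the newly claimed red edge $e$ meet the existing red components (fresh component / extending one component / merging two components), with subcases according to whether $e$ hits the interior, a blue hugging path, a gate, or a bracket, and with the $1{+}b$ budget and the double-edge rule handling overwrites exactly as you describe. However, there is a genuine gap: the entire technical content of the paper's proof is the part you explicitly defer, namely the verification that after each disturbance a wholly non-red bracket of one of the eight types is actually available and reachable within the budget. The paper does not prove this by an amortised count via the boundary lemma $|\beta(C)|\le|E(C)|+3$ (that lemma only serves as motivation that three spare edges per component ought to suffice); it proves it by exhaustively listing the transitions --- e.g.\ if $\mathcal{V}$ takes the first, second or third edge of a type~1 bracket, $\mathcal{H}$ claims one specified new blue edge and passes to a bracket of type 4, 7 or 5 respectively; in the merging case it checks which pairs of bracket types can share an edge at all (type 1 with types 1, 3, 4, 6, 7, 8 but not 2 or 5) and names the resulting fused bracket in each case; and for gates it observes that types 5 and 6 never align with a gate while types 1, 2, 4 do for both top and bottom components. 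Without this enumeration your argument is an outline with the decisive step labelled ``to be settled,'' so it does not yet establish the lemma.

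Two smaller points. First, your appeal to the single-move security lemma is misplaced here: that lemma concerns the $q$-response game and needs $n\ge q+2$, whereas in the secure game $\mathcal{V}$ is already barred from winning by the game's own conditions (i)--(iii) (in particular condition (ii) forbids joining a top and a bottom component), which is why the present lemma holds for all $n\ge 2$ independently of $q$. Second, your case (d) (attachment to the top or bottom row) in the paper is not handled by a generic ``arch-shaped blue path'' argument but by a dual-vertex-level computation (the gate sits at level $2n-1$ or $2n-2$, respectively $2$ or $3$), which is part of the same finite check you would need to carry out.
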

\begin{proof}
	Suppose the grid is secure before the beginning of $\mathcal{V}$'s turn and let $\mathcal{V}$ claim edge $e$ where $e*$ is adjacent to dual-vertices $v_1$ and $v_2$.\\
	
	\textbf{Case 1.} Let $v_1$ and $v_2$ not be part of any existing red components.\\
	If neither $v_1$ or $v_2$ are top/bottom vertices then clearly by claiming the edge above  or top left of $e$, $\mathcal{H}$ can secure the new component with this edge and a bracket of type 1,2 or 3 depending on orientation of $e$. Else if WLOG $v_1$ is a top (/bottom) vertex then by claiming the edge bottom (/top) left of $v_2$ it is a secure top (/bottom) component.
	
	\textbf{Case 2.} Let $v_1$ be part of an existing red component $C$ and $v_2$ be not. Let $C$ be secured by blue path $P$ and bracket of non-red edges $B$.
	
	\emph{Case 2a}. If $e$ lies in the interior then $C$ is still secure.
	
	\emph{Case 2b}. If $e$ is a blue edge then by the conditions of the game $v_2$ cannot be a bottom or top vertex and $\mathcal{H}$ can claim two edges. She claim the non-red edges $f_1$ and $f_2$ whose dual are adjacent to $v_2$ to 're-secure' the component with the the new blue path $P-{e}+{f_1,f_2}$ and the same bracket $B$.
	
	\emph{Case 2c}. If $e$ is the 'gate' of a top (or bottom) component $C$ then we consider the vertex level of $v_2$.
	
	If $v_2$ is on dual vertex level $2n-1$ (or $2$ for bottom) and $e$ goes back on the component, then player $\mathcal{H}$ claims edge with midpoint $(v_2)+(0.5,0)$ to make the component extra secure.
	
	If $v_2$ is on dual vertex level $2n-2$ (or $3$ for bottom component) then player $\mathcal{H}$ claims edge with midpoint $(v_2)+(0,-0.5)$ (or $(v_2)+(0,0.5)$ for bottom component). This makes the new component secure by the definition.\\
		
	\emph{Case 2d}. If $e$ is part of the bracket $B$ securing the existing red component then we consider cases on each edge of each bracket.
	
	The cases to check, for example we will consider bracket type 1 with end vertices $(x,y)$ and $(x+2,y)$:
	\begin{itemize}
		\item If $e=(x-0.5,y-0.5)$ then $\mathcal{H}$ claims edge $f=(x-1,y)$ to secure the component with $P\cup f$ and bracket of type 4 with end vertices $(x-2,y)$ and $(x+2,y)$.
		\item If $e=(x,y-1)$ then $\mathcal{H}$ claims edge $f=(x-0.5,y-0.5)$ to secure the component with $P\cup f$ and bracket of type 7 with end vertices $(x-1,y-1)$ and $(x+2,y)$.
		\item If $e=(x+1.5,y-0.5)$ then $\mathcal{H}$ claims edge $f=(x-0.5,y-0.5)$ to secure the component with $P\cup f$ and bracket of type 5 with end vertices $(x-1,y-1)$ and $(x+2,y)$.
	\end{itemize}
	The cases of the other bracket types are reacted to analogously by $\mathcal{H}$.
		
	\textbf{Case 3.} Let $v_1$ and $v_2$ be part of existing components $C_1$ and $C_2$ . Let $C_i$ be secured by blue path $P_i$ and either bracket of non-red edges $B_i$ or gate of non-red edge $g_i$ if $C_i$ is not secure. \\
	Clearly $C_1$ cannot equal $C_2$ else $e*$ creates a dual cycle in that connected component which breaks rule (i).\\
	$C_1$ and $C_2$ cannot both be top or bottom components else $e$ either creates a dual arch or connects a top and bottom component contradicting conditions (i) and (ii) respectively of the secure game.\\
	
	Case 3a. $C_1$ is a top or bottom component and $C_2$ is a floating component. By restriction (iii) $e$ must be in the bracket $B_2$. If $e$ is part of the blue path $P_1$ then player $\mathcal{H}$ has 2 moves and chooses the other 2 edges of $B_2$ to make $C_1 \cup C_2 \cup {e*}$ a secure top or bottom component. If $e$ is the gate $g_1$ then we consider cases of each bracket.
	
	There is no positioning of brackets type 5 and 6 that align with gates of top or bottom components. 
	
	Brackets of type 3, 4, 7 and 8 are only compatible with gates of bottom components. For all instances the left most edge of the bracket is equal to the gate edge.  $\mathcal{H}$ claims the right most edge and leaves the middle edge of the brackets to be the new gate of the new bottom component.
	Brackets of type 1, 2 and 4 are compatible with gates for both top and bottom components. Again, for all instances the left most edge is equal to the gate edge. $\mathcal{H}$ claims the middle edge of the bracket leaving the right edge to be the new gate of the new top or bottom component.
	
	Case 3b. $C_1$ and $C_2$ are both floating components. If $e$ is part of both $P_1$ and $P_2$, then player $\mathcal{H}$ has 3 moves and uses them to claim all edges of bracket $B_1$. If $e$ is part of $P_1$ and $B_2$, then player $\mathcal{H}$ has 2 moves and uses them to claim the remaining 2 edges of bracket $B_2$.\\
	Finally if $e$ is part of $B_1$ and $B_2$, then we consider cases on brackets.\\
	Cases are easy to check since many brackets aren't compatible with one another. We will check The possible instances for the bracket of type 1:
	\begin{itemize}
		\item Alignment is possible with another bracket of type 1 where the third and first edge is shared respectively. $\mathcal{H}$ can claim the first edge of the left bracket $f$ to secure the new component with $P_1\cup P_2\cup f$ and bracket of type 6.
		\item Alignment is possible between brackets of type 1 and 3, sharing first and second edges respectively. $\mathcal{H}$ claims the third edge of the type 3 bracket $f$ to secure the new component with $P_1\cup P_2\cup f$ and bracket of type 4.
		\item Alignment is possible between brackets of type 1 and 4, sharing first and third edges respectively. $\mathcal{H}$ claims the first edge of the type 4 bracket $f$ to secure the new component with $P_1\cup P_2\cup f$ and bracket of type 6.
		\item Alignment is possible between brackets of type 1 and 6, sharing first and third edges respectively. $\mathcal{H}$ claims the first edge of the type 6 bracket $f$ to secure the new component with $P_1\cup P_2\cup f$ and bracket of type 6.
		\item Alignment is possible between brackets of type 1 and 7, sharing first and second (/or third) edges respectively. $\mathcal{H}$ claims the third edge of the type 7 bracket (/or third of the type 1 bracket) $f$ to secure the new component with $P_1\cup P_2\cup f$ and bracket of type 4 (/or 8).
		\item Alignment is possible between brackets of type 1 and 8, sharing first and second edges respectively. As the third edge of bracket type 8 is now not an external boundary edge the new component is secured by $P_1\cup P_2$ and a bracket of type 4.
		\item No alignment is possible between brackets of type 1 and type 2 and 5. 
	\end{itemize}
	
	Hence, at the end of player $\mathcal{H}$'s turn, the board is always secure. Hence $\mathcal{H}$ has a winning strategy for the secure game on $\Delta_{\infty,n}$ for $n\geq2$.
		
\end{proof}
	
Now, we apply the strategy for the secure game to the q-response game to get our final result.

\begin{thm}
	Maker has a winning strategy for the q-response secure game on $\Delta_{\infty,n}$ for $n\geq q+2$.
\end{thm}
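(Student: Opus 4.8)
The plan is for $\mathcal{H}$ to play the $q$-response game by \emph{simulating} the secure game in the background, maintaining the single invariant that the grid is secure at the end of each of her turns. The whole argument is then an induction on $\mathcal{V}$'s turns that glues together the two lemmas already established: the lemma that $\mathcal{H}$ wins the secure game on $\Delta_{\infty,n}$ (which says security can always be restored after a red edge), and the lemma that a secure grid cannot be turned into a bottom--top red crossing in a single $\mathcal{V}$-turn when $n \geq q+2$ (which says that merely maintaining security suffices to win).

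For the base case, the empty board is vacuously secure. For the inductive step, suppose the grid is secure at the start of $\mathcal{V}$'s turn and that $\mathcal{V}$ claims $r_t \leq q$ unclaimed edges $e_1,\dots,e_{r_t}$. I would feed these to the secure-game strategy one at a time, letting $\mathcal{H}$ restore security after each $e_i$ before the next is processed. The crucial observation is that in the $q$-response game $\mathcal{V}$ may claim only \emph{unclaimed} edges, so each $e_i$ replaces $b=0$ blue edges; consequently every secure-game response costs exactly one edge rather than $1+b$, and over the whole turn $\mathcal{H}$ spends exactly $r_t$ edges, matching her $q$-response budget. Whenever the secure-game strategy would claim an already-blue edge (i.e. create a blue double edge to satisfy the overlap condition in the definition of security), $\mathcal{H}$ instead claims an arbitrary free edge: the double edge is pure bookkeeping that is never cashed in, because $\mathcal{V}$ never recaptures a blue edge, and claiming a free edge is never disadvantageous.

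Once all $r_t$ edges have been processed the grid is again secure, completing the inductive step, so security holds at the start of every $\mathcal{V}$-turn and the single-turn lemma (using $n \geq q+2$) guarantees $\mathcal{V}$ never completes a bottom--top red crossing; hence $\mathcal{H}$ wins. I would also record that a $\mathcal{V}$-move which would create a red cycle or red arch, or which would join a top component to a bottom component, is either excluded by the standing assumption that Breaker makes no dual cycle or dual arch, or is precisely the crossing already ruled out; thus the secure-game strategy covers every relevant $\mathcal{V}$-move.

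The step I expect to be the main obstacle is justifying the sequential simulation rigorously, which has two delicate points. First, $\mathcal{H}$'s intermediate responses to $e_1,\dots,e_i$ must never occupy an edge that $\mathcal{V}$ has committed to later in the same batch $e_{i+1},\dots,e_{r_t}$; this can be arranged because each response is local to the newly grown red component, and any genuine collision can be absorbed by the free-edge convention. Second, restoring one component must not break security \emph{globally} — in particular the overlap condition that shared securing-path edges be blue double edges must survive every merge. Verifying that each case of the secure-game lemma costs only a single edge in the $b=0$ regime, and that the component merges handled there leave all securing paths mutually compatible, is the technical heart of the argument.
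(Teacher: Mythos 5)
Your overall architecture---simulate the secure game edge by edge inside each $\mathcal{V}$-batch, restore security after each edge, and invoke the single-turn lemma to conclude---is exactly the paper's approach. But there is a genuine gap: you verify that simulated $\mathcal{V}$-moves respect restrictions (i) and (ii) of the secure game, and you never address restriction (iii), which forbids $\mathcal{V}$ from claiming an edge of a blue securing path $P$ of a floating component $C$ when doing so turns $C$ into a top or bottom component. This is not a formality. In the simulation, $\mathcal{H}$'s responses to $e_1,\dots,e_{i-1}$ create blue securing paths that did not exist in the real game when $\mathcal{V}$ committed to his batch; a later edge $e_i$ of the same batch can therefore land on such a freshly built path and promote a secured floating component into a top or bottom component. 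The secure-game strategy has no prescribed answer to such a move (it is simply disallowed there), so the simulation breaks down at that point, and your ``free-edge convention'' does not repair it---the problem is not a budget collision but a position the strategy was never designed to handle.

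The paper closes this hole with an idea your proposal lacks: it chooses the \emph{order} in which the batch $A=\{e_1,\dots,e_r\}$ is fed to the simulation. Since top and bottom components are rooted trees (rooted at their unique top or bottom dual-vertex), the edges are processed in order of increasing graph distance to the root. Under this ordering, if some $e_i$ were to violate (iii), then $e_i$ is the edge that attaches $C$ to the top or bottom, so no other batch edge adjacent to $C$ precedes it; hence the securing path $P$ of $C$ must have been entirely blue \emph{before} $\mathcal{V}$'s turn began, contradicting the $q$-response rule that $\mathcal{V}$ claims only unclaimed edges. You need this (or an equivalent) ordering argument for the sequential simulation to be well-defined. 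A secondary, smaller inaccuracy: your claim that $b=0$ for every simulated edge is not quite right, since $e_i$ may coincide with a blue edge placed in simulation in response to an earlier $e_j$; the paper's accounting (the displaced blue edge is refunded and reallocated, leaving at most $r$ edges spent in total) is what actually balances the budget.
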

\begin{proof}
	We aim to prove that player $\mathcal{H}$  can keep the board secure at the end of their go.\\
	Suppose the grid is secure at the start of $\mathcal{V}$'s turn and he claim $r\leq q$ edges $A={e_1, e_2, \dots e_r}$ in their turn. By Lemma 2.1 $\mathcal{V}$ cannot win in this single turn.

	Player $\mathcal{H}$ responds to each of these edges one by one as if she were played in the secure game. If an edge $e_j$ was claimed by $\mathcal{H}$ is response to a previous edge $e_i$ then $\mathcal{H}$ may use this edge elsewhere as in the secure game. After responding to all edges in  $A$, player $\mathcal{H}$ has a set $B$ of at most $r$ unselected edges. The grid remains secure.
	
	However, we must check what happens if player $\mathcal{V}$ contradicts conditions (i)-(iii) of the secure game:
	\begin{enumerate}[(i)]
		\item We can assume player $\mathcal{V}$ never creates a dual cycle or dual arch in the q-response game. So condition (i) is satisfied.
		\item For the second condition $\mathcal{V}$ cannot claim a dual edge if doing so connects a top and bottom component. This would mean that $\mathcal{V}$ wins and, by Lemma 2.1, $\mathcal{V}$ cannot win in one turn from a secure grid. So condition (ii) is satisfied.
		\item Finally, there is condition (iii), which states If $C$ is a floating component and $P$ a blue path that 'secures' $C$, $\mathcal{V}$ cannot claim an edge of $P$ if doing so turns $C$ into a top or bottom component.\\
		For this we choose an order for $A$ for $\mathcal{H}$ to respond. Due to conditions (i) and (ii), all top and bottom components are rooted trees with the root being the unique top or bottom dual-vertex. Let $A$ be ordered such that:\\
		If $e,e* \in A$ are dual edges of the same top or bottom component after $\mathcal{V}$'s turn, and $e$ is strictly closer in graphical distance to the root, then $e$ appears before $e*$ in $A$.\\
		Suppose, using this ordering, that there exists an edge $e_i \in A$ such that $\mathcal{V}$ claiming $e_i$ violates condition (iii). Hence, before $e_i$ is played a floating component $C$ is secured by blue path $P$ and bracket $B$ is made a top or bottom vertex after $e_i$ is played. Given the ordering of $A$, no other edge of $A$ adjacent to $C$ could have been played before $e_i$ in the secure game as $e_i$ is what joins $C$ to the bottom or top component and therefore has a strictly smaller graphical distance to the root. As none of the dual edges adjacent to $C$ were played before $e_i$ all edges of $P$ were present before $\mathcal{V}$ started their turn.\\ Therefore in the q-response game $\mathcal{V}$ could not claim $e_i$ hence a contradiction.\\
		As player $\mathcal{H}$ can hence ensure the grid is always secure at the start of $\mathcal{V}$'s turn and he can't win in one turn from a secure grid, then as the grid is finite, eventually $\mathcal{H}$ will achieve a left right crossing path. This is a winning strategy for Horizontal player
	\end{enumerate}		
\end{proof}

\section{A similar strategy for Breaker}
We wish to prove that Breaker has a winning strategy in the $(p,4p)$-Crossing game.

\begin{thm}
	Breaker has a winning strategy for the $(p,q)$-crossing-game on the triangular grid graph $\Delta_{m,n}$ for $q\geq 4p$ and $m\geq q+1$
\end{thm}

Using the fact that the Breaker's objective on a planar grid graph $G$ is equivalent to Maker's objective on a the dual graph $G'$ rotated $90^\circ$, we can replicate the method used to get a strategy for Maker. 
	
It is important to note, however,  that as the Maker of the original game has first move, that Breaker is the first to  in this 'dual game'. To differentiate these games Maker and Breaker on this dual grid of this game will be referred to as 'DualMaker' and 'DualBreaker'.
	
We will call the dual graph generated as above by $\Delta_{m,n}$, $H_{n,m}$ since the dual of the triangular grid is hexagonal. On this new grid graph, DualMaker plays on the Hexagonal grid graph and DualBreaker plays on the triangular grid graph.
	
\begin{figure}[h]
	\includegraphics[scale=0.6]{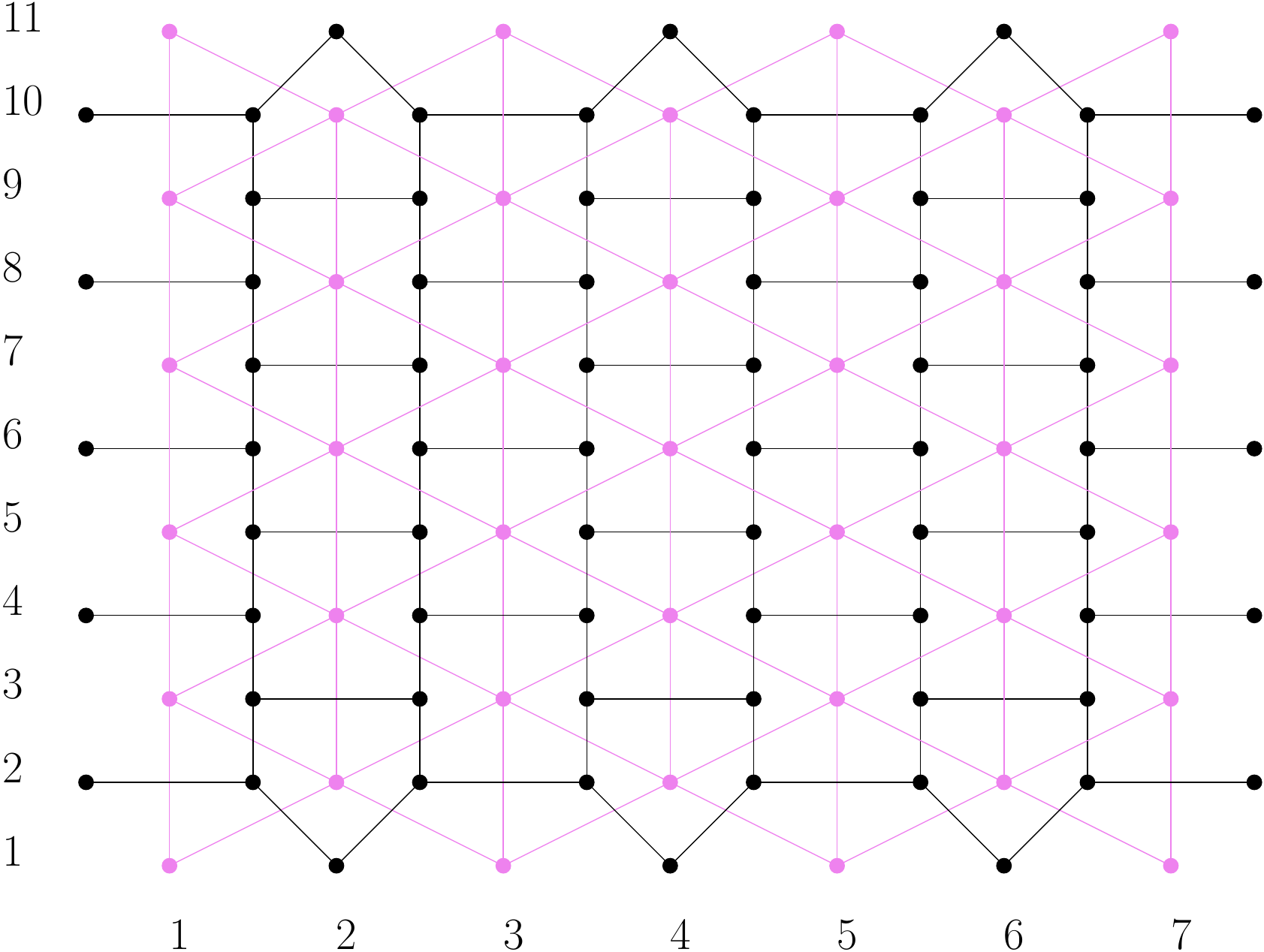}
	\caption{The grid graph $H_{7,6}$}
\end{figure}
	
Again we consider a response game though this time the Horizontal player may respond with 4 times the moves of Vertical.
	
\begin{defn}{q-4response game}
	The q-4response game is played on $H_{\infty,n}$ by players Horizontal ($\mathcal{H}$) and Vertical ($\mathcal{V}$). 
	On each turn $t$, $\mathcal{V}$ picks an integer $r_t \in [q]$ and claims $r_t$ unclaimed edges in $H_{\infty,m}$ marking them red. $\mathcal{H}$ responds to each of $\mathcal{V}$'s turns by also claiming $4r_t$ unclaimed edges, marking them blue. 
	The goals for Vertical ($\mathcal{V}$) is equivalent to that of Breaker. Horizontal ($\mathcal{H}$) wins if she can prevent $\mathcal{V}$ from getting a top bottom path.
\end{defn}

Again, the crossing-game is a specific version of the q-4response game where $r_t=q \;\;\forall t$ , a winning strategy for horizontal on this game can be used directly by DualMaker as a winning strategy for the $(p,4p)$-crossing game on $H_{m,n}$ for any $m\geq1$.

\begin{defn}[DualBrackets]
	We define DualBrackets similarly to Brackets, as 6 edge-long paths from vertex $(x,y)$, in the following ways:
		
	\begin{description}
		\item[Type 1] the edges $\{ (x,y-0.5) , (x+0.5,y-1) , (x+1, y-0.5) , (x+1, y+0.5) , (x+1, y+1.5) , (x+1, y+2.5)\}$ form a bracket of Type 1 if none are red.\\
		The interior dual-vertices of this bracket are defined as $\{ (x+0.5,y), (x+0.5,y+2)\}$.
		
		\item[Type 2] the edges $\{ (x,y-0.5) , (x+0.5,y-1) , (x+1, y-0.5) , (x+1.5,y) , (x+2,y+0.5) , (x+2,y+1.5)\}$ form a bracket of Type 2 if none are red.\\
		The interior dual-vertices of this bracket are defined as $\{ (x+0.5,y), (x+1.5,y+1)\}$.
		
		\item[Type 3] the edges $\{ (x,y-0.5) , (x+0.5,y-1) , (x+1, y-0.5) , (x+1,y+0.5) , (x+0.5,y+1) , (x,y+1.5)\}$ form a bracket of Type 3 if none are red.
		The interior dual-vertices of this bracket are defined as $\{ (x,y+0.5), (x-0.5,y+0.5)\}$.
			
		\item[Type 4] the edges $\{ (x,y-0.5) , (x+0.5,y-1) , (x+1, y-0.5) , (x+1, y+0.5) , (x+1,y+1.5) , (x+1.5,y+2)\}$ form a bracket of Type 4 if none are red.
		The interior dual-vertices of this bracket are defined as $\{ (x+0.5,y), (x+0.5,y+2) , (x+1.5,y+3)\}$.
		
		\item[Type 5] the edges $\{ (x,y-0.5) , (x+0.5,y-1) , (x+1, y-0.5) , (x+1.5,y) , (x+2,y+0.5) , (x+2.5,y+1)\}$ form a bracket of Type 5 if none are red.\\
		The interior dual-vertices of this bracket are defined as $\{ (x+0.5,y), (x+1.5,y+1), (x+2.5,y+2)\}$.
		
	\begin{figure}[h]
		\centering
		\includegraphics[scale=0.6]{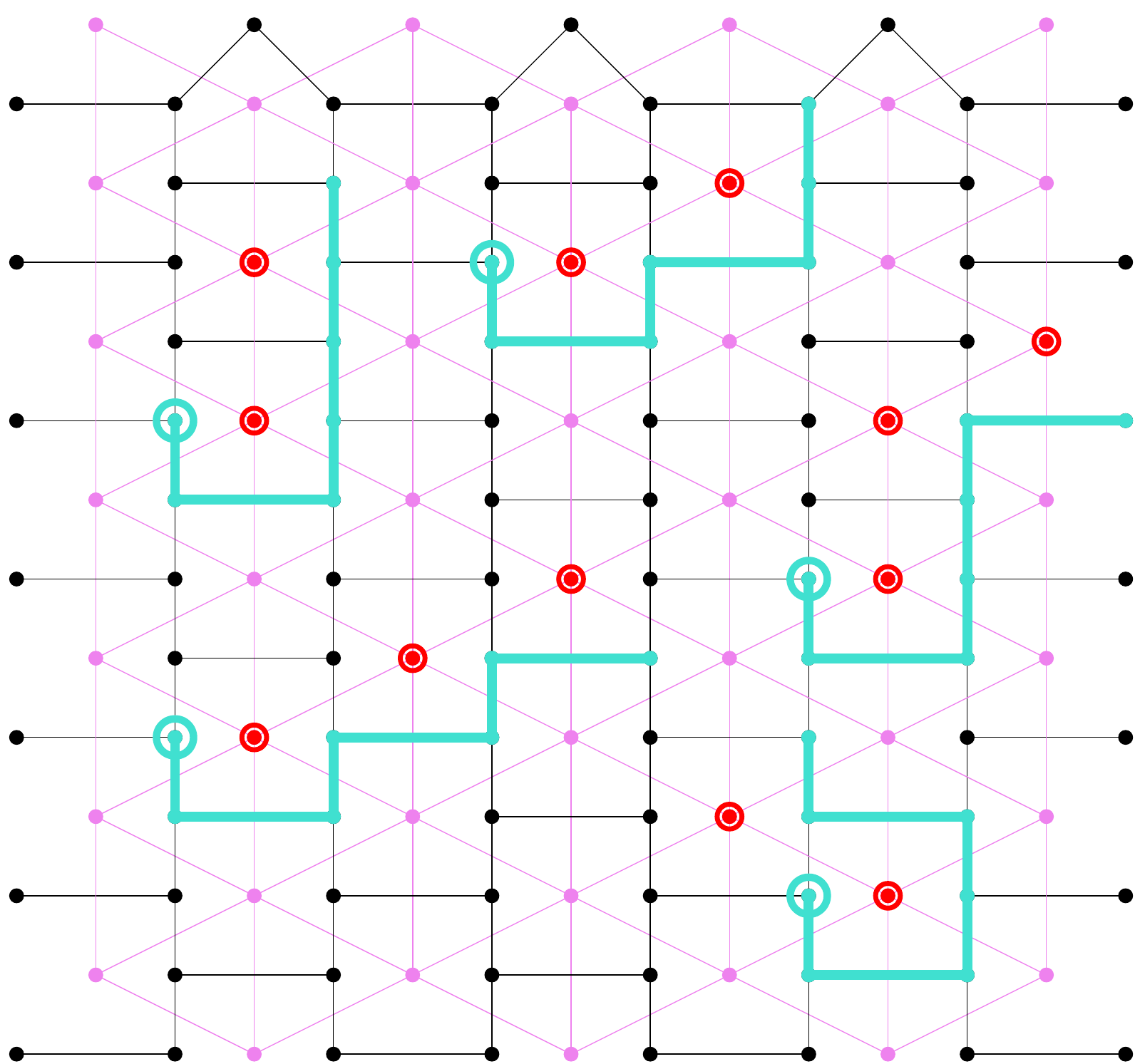}
		\caption{DualBrackets: Type 1, 2, 4, 3 and 5 clockwise from top right with $(x,y)$ marked with turquoise circle and interior dual-vertices in red}
	\end{figure}
	\end{description}
\end{defn}

Secure Floating Components are defined as they are for the triangular grid graph however with dualbrackets used instead of the original brackets. 

\begin{remark}
	All connected components $C$ of dual edges have a left most lowest dual-vertex. The dualbrackets are formed by being the 3 lowest edges whose dual are adjacent to this lowest dual-vertex and one of the 5 possible 3-edge-paths along the exterior dual edges of $C$.
\end{remark}

Before defining secure top and bottom components, we define top-securing vertices and bottom-securing vertices of $H_{n,m}$ to be the vertices of form ${(x,2m-2)}$ and ${(x,2)}$ respectively. We do this because edges above these vertices are never required to secure a top/bottom component, as if $\mathcal{V}$ chooses these edges it would create a dual arch.
	
\begin{defn}[Secure top component]
	A top component $C$ with unique top-most dual-vertex $t=(x, 2m-1)$ is extra-secured by blue path $P$ from a top-securing vertex $v$ to top-securing vertex $v'$ or secured by an unclaimed edge $g=(x+0.5,2m-2.5)$ and blue path $P$ from top-securing vertex $v$ to the lower endpoint of $g$ if:
	\begin{enumerate}[(i)]
		\item $C\backslash t$ is in the the interior of the arch $P\cup \{g\}$
		\item The dual $e*$ of each edge $e$ of $P$ is adjacent to a vertex of $C$ 
	\end{enumerate}
\end{defn}
	
\begin{defn}[Secure bottom component]
	A bottom component $C$ with unique bottom-most dual-vertex $b=(x, 1)$ is extra-secured by blue path $P$ from a bottom-securing vertex to bottom-securing vertex, or secured by an unclaimed edge $g=(x+0.5,2.5)$ and blue path $P$ from top-securing vertex $v$ to the upper endpoint of $g$ if:
	\begin{enumerate}[(i)]
		\item $C\backslash b$ is in the the interior of the arch $P\cup \{g\}$
		\item The dual $e*$ of each edge $e$ of $P$ is adjacent to a vertex of $C$ 
	\end{enumerate}
\end{defn}

\begin{figure}[h]
	\includegraphics[scale=0.6]{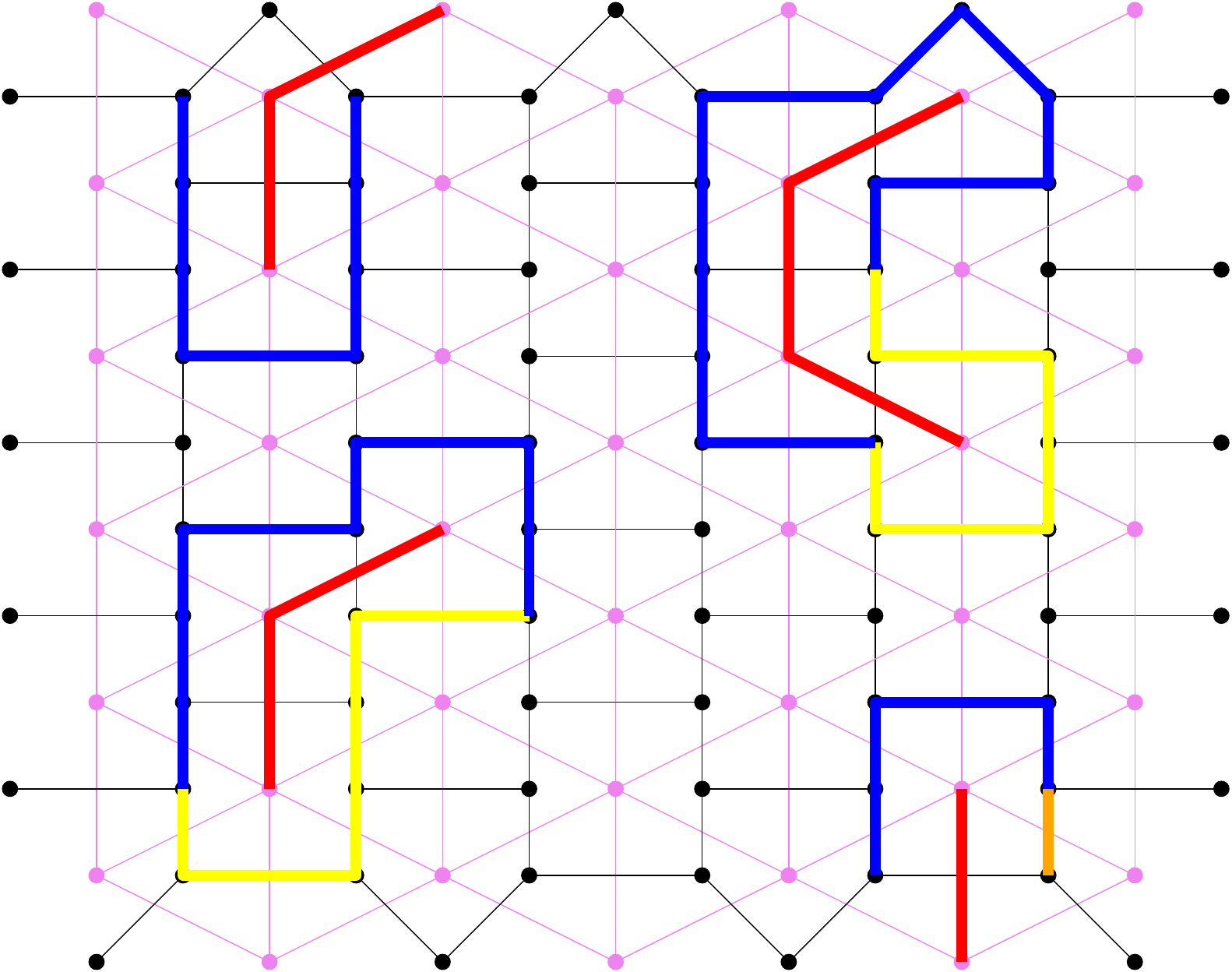}
	\caption{from top left clockwise: An extra-secure top component secured by blue path and, a secure floating component secured by blue path and Type 2 DualBracket, a secure bottom component secured by blue path and gate in orange and a secure floating component secured by blue path and Type 4 DualBracket}
\end{figure}

\begin{lem}
	Let $m\geq 1.5q+2$. If the grid is secure at the start of $\mathcal{V}$'s turn in the q-4response game played on $H_{n,m}$, then $\mathcal{V}$ cannot win in a single turn.
\end{lem}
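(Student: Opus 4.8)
The plan is to mirror the proof of Lemma 2.1, the analogous single-turn statement for the $q$-response game on $\Delta_{\infty,n}$, transporting every step onto the triangular dual of $H_{n,m}$ on which the red components now live. The first observation is that the factor of $4$ in the $q$-$4$response game is irrelevant here: whether $\mathcal{V}$ can finish in one turn depends only on how far $r_t \le q$ newly claimed red edges can reach, not on $\mathcal{H}$'s responses. So I would fix a secure position at the start of $\mathcal{V}$'s turn, suppose for contradiction that $\mathcal{V}$ claims $l \le q$ edges $e_1,\dots,e_l$ completing a bottom-to-top red dual path, order them by their appearance along that path, and let $v_i^-,v_i^+$ be the dual-vertex levels of the lower and upper endpoints of $e_i$. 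The target is an induction bounding the highest level reached, of the form $v_k^+ \le 3k+1$; combined with $l \le q$ and $m \ge 1.5q+2$ this gives $v_l^+ \le 3q+1 \le 2m-3$, strictly below the level $2m-2$ that the path must reach to hit the top-most dual-vertex or enter a secure top component, a contradiction.

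For the base case $k=1$, I would argue as in Lemma 2.1: $e_1$ either meets a bottom dual-vertex directly or enters a secure bottom component, and in the latter case security forces $e_1$ to cross the gate $g=(x+0.5,2.5)$, bounding $v_1^+$ by a small constant consistent with $3\cdot 1+1$. The inductive step then splits exactly as before. If $e_k$ and $e_{k+1}$ share a vertex, then since adjacent dual-vertices of the triangular dual differ by at most a fixed (small) number of levels, $v_{k+1}^+-v_k^+$ is small. If they do not share a vertex, the path must pass through a secure floating component $C$ lying between them; here $e_k$ and $e_{k+1}$ are duals of edges of the DualBracket $B$ securing $C$, the path enters at one interior dual-vertex of $B$ and leaves at another, and $v_{k+1}^+-v_k^+$ is controlled by the maximal level span of $B$'s interior dual-vertices together with the two crossing edges.

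The main obstacle, and the only genuinely new computation, is the DualBracket case analysis. Unlike the triangular game, DualBrackets are $6$-edge paths and carry up to three interior dual-vertices (Types $4$ and $5$), so their interior vertices can span more levels; I expect the worst case to be the DualBracket whose interior dual-vertices range over the largest vertical extent, namely Type $4$ with interior set $\{(x+0.5,y),(x+0.5,y+2),(x+1.5,y+3)\}$, yielding the sharp per-step bound
\begin{align*}
v_{k+1}^+ - v_k^+ \le 3 .
\end{align*}
Checking that no DualBracket type beats this, by inspecting each of the five types against the dual-vertex-level assignment on the triangular dual of $H_{n,m}$, is the crux of the argument; it is precisely this maximal gain of $3$ (against $2$ in the triangular case) that raises the coefficient from $1$ to $1.5$ in the hypothesis $m \ge 1.5q+2$. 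Granting the bound, the induction delivers $v_l^+ \le 3l+1 \le 3q+1 \le 2m-3 < 2m-2$, so $e_l$ reaches neither a top dual-vertex nor the gate of a secure top component, and hence $\mathcal{V}$ cannot win in a single turn from a secure grid.
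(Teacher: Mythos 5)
Your proposal follows the paper's proof essentially step for step: the same ordering of the $l \le q$ new edges along the putative crossing path, the same induction giving a per-step level gain of at most $3$ (the paper proves $y_k^+ \le 3k-1$, you the marginally weaker $v_k^+ \le 3k+1$, which still suffices against $2m-3$ under $m \ge 1.5q+2$), and the same observation that the factor $4$ in $\mathcal{H}$'s response is irrelevant to a single-turn win. The only substantive discrepancy is in the deferred DualBracket check: the paper computes the gains for Types 1--5 as $3,2,2,1,0$, so the extremal case is Type 1 rather than Type 4 as you conjecture (the relevant quantity is the level difference from an entry interior dual-vertex to an exit vertex outside $C$, not the vertical extent of the interior vertices alone), but the resulting bound of $3$ is exactly the one you use.
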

\begin{proof}
	Let $m\geq (1.5)q$ and the grid be in a secure position before $\mathcal{V}$'s turn and he selects $q$ edges. 

	Suppose he claims $l \leq q$ edges ${e_1,e_2,\dots,e_l}$to create a bottom-top crossing path of red edges. Let the order of these edges be in their use from bottom to top in the path. 
	
	Define $y_i^-$ and $y_i^+$ to be the the vertical labelling of the vertices adjacent to edge $e_i$ in the order of appearance in the bottom top path. We will prove by induction that  $y_i^+ \leq 3i-1$.
	
	For the first edge it can either be adjacent to a bottom most dual-vertex, in which case $y_1^- = 1$ and $y_1^+ = 3$, or it's adjacent to a bottom most component and is dual to the gate helping secure the bottom component. By definition of bottom component this would mean $y_1^+ = 2$. Therefore the statement holds for $i=1$
	
	Suppose the statement is true for $i=k$ and $y_k^+ \leq 3k-1$. If $e_{k+1}$ is adjacent to $e_k$ then clearly $y_{k+1}^+ \leq y_k^+ +2 \leq 3(k+1)-1$. If not then $e_k$ is adjacent to an existing component. The component cannot be a bottom component or a top component else $e_{k+1}$ is not part of the path, so it is a floating component and $e_k$ and $e_{k+1}$ are dual edges of the dualbrackets. The value of $y_{k+1}^+ - y_k^+$ is bounded by the largest difference in vertical label between an interior dual-vertex of the component and a vertex not in $C$ adjacent to the dual of the bracket securing $C$. For dualbrackets it is easy to check that for Types 1, 2, 3, 4 and 5 these differences are 3, 2, 2, 1 and 0 respectively. 
	Hence, $y_{k+1}^+ - y_k^+ \leq 3$ and $y_{k+1}^+ \leq y_k^+ +3 \leq 3(k+1)-1$.
	
	Since the lower top dual-vertices have vertical label $2m-1\geq 3q+3$ and any top component must be reached through a gate to vertex with vertical label $2m-3 \leq 3q+1$. This contradicts that $\mathcal{V}$ creates a top bottom crossing path from a secure board in one turn.
\end{proof}

We define the secure game on the hexagonal grid graph as the same as on the triangular grid graph except $\mathcal{H}$ may claim 3 addition edges.

\begin{defn}{Secure game on $H_{n,m}$}
	The Secure game on $H_{n,m}$ is played similarly to that on $\Delta_{\infty,n}$. At any point an edge may be unclaimed, red, blue or a blue double edge (claimed by $\mathcal{H}$ twice).\\
	On each turn $\mathcal{V}$ claims ANY edge and makes it red, subject to the following conditions:
	\begin{enumerate}[(i)]
		\item $\mathcal{V}$ cannot claim an edge if doing so would create either a red dualcycle or a red dualarch (path from top dual-vertex to top dual-vertex or equivalently for bottom dual-vertices).
		\item $\mathcal{V}$ cannot claim an edge to connect a top component and bottom component
		\item If $C$ is a floating component and $P$ a blue path that 'secures' $C$, $\mathcal{V}$ cannot claim an edge of $P$ if doing so turns $C$ into a top or bottom component.
	\end{enumerate}
	Once $\mathcal{V}$ has claimed an edge $e$, $\mathcal{H}$ responds by claiming $4+b$ unclaimed edges, where $b$ is the number of blue edges replaced by $e$.\\
	At any stage the grid is secure if:
	\begin{enumerate}
		\item the grid is 'secure' with respect to the q-response game
		\item for distinct red components $C$ and $C'$ secured by blue paths $P$ and $P'$, any edge in $P \cap P'$ is a blue double edge.
	\end{enumerate}
	$\mathcal{H}$ wins if she can ensure the grid is secure at the end of each of their goes. $\mathcal{V}$ wins otherwise.
\end{defn}

\begin{lem}
	$\mathcal{H}$ has a winning strategy for the secure game on $H_{n,m}$ for $m\geq3$
\end{lem}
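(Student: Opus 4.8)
The plan is to mirror the proof of the corresponding secure-game lemma for $\Delta_{\infty,n}$ (Lemma 3.1) almost verbatim, with the $3$-edge Brackets replaced by the $6$-edge DualBrackets and the $1+b$ response budget replaced by the $4+b$ budget of the secure game on $H_{n,m}$. I would again maintain the invariant that the grid is secure at the start of every $\mathcal{V}$-turn: assuming the grid is secure, I let $\mathcal{V}$ claim a single edge $e$ whose dual $e^{*}$ is adjacent to dual-vertices $v_1$ and $v_2$, where $e$ replaces $b$ blue edges, and I must show that $\mathcal{H}$ can restore security using her $4+b$ moves. Since $\mathcal{V}$ plays one edge at a time in the secure game, treating this single move and then re-securing the affected component(s) is enough.

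I would organise the argument by the same three-way split on $v_1,v_2$ used in the $\Delta$ proof. In \textbf{Case 1}, neither $v_1$ nor $v_2$ lies in an existing red component: $\mathcal{H}$ lays the blue path of a fresh two-vertex floating component and completes the appropriate DualBracket (Type 1, 2 or 3, according to the orientation of $e$, these being the DualBrackets with exactly two interior dual-vertices), or, if one of $v_1,v_2$ is a top/bottom dual-vertex, secures it as a top/bottom component through a gate at level $2m-2.5$ (resp.\ $2.5$). In \textbf{Case 2}, $v_1$ lies in an existing component $C$ and $v_2$ does not, with the same subcases as before: $e$ in the interior (nothing to do), $e$ a blue edge (spend the $b$ surplus re-routing the blue path $P$ to reach $v_2$), $e$ the gate of a top/bottom component (re-secure at a fresh gate, which is where the securing-vertex levels $2m-2$ and $2$ and the hypothesis $m\geq3$ are needed), and $e$ an edge of the DualBracket $B$ (claim the one or two edges needed to swing $C$ onto a larger DualBracket type). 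In \textbf{Case 3}, $v_1,v_2$ lie in distinct existing components, split into a top/bottom component meeting a floating component (Case 3a) and two floating components merging (Case 3b); in both, any blue edge shared by the two securing paths is promoted to a blue double edge so that the second secure-condition is preserved.

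The combinatorial heart, and the step I expect to be the main obstacle, is the DualBracket-compatibility analysis underlying Cases 2d, 3a and especially 3b. For each ordered pair of the five DualBracket types I would determine which alignments are geometrically realisable, that is, when the two brackets can share the red edge $e$ and close up around the merged vertex set, and for each admissible alignment exhibit the single extra edge $f$ that $\mathcal{H}$ claims together with the resulting DualBracket type, checking that its interior dual-vertices are exactly $V(C)\cup V(C')\cup\{v_1,v_2\}$ and that the merged component lies inside $P\cup P'\cup\{f\}$. As in the triangular case most pairings are incompatible, which prunes the table sharply, but the five longer bracket types make the bookkeeping heavier and this is where an error is most likely to hide; I would present the Type 1 alignments in full and treat the remaining types analogously.

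Finally I would verify the move budget. In each case the edges $\mathcal{H}$ actually claims amount to at most the portion of one DualBracket or blue path she has to newly lay, plus the $b$ re-routing edges when $e$ was blue, and this never exceeds $4+b$; the factor $4$ is chosen precisely so that completing a $6$-edge DualBracket of a fresh component (Case 1) and repairing a bracket under a merge (Case 3) both stay within budget, matching the per-edge growth of the external boundary on this grid. Combining the invariant with the fact that $\mathcal{V}$ cannot win in a single turn from a secure grid (Lemma 4.2) shows that $\mathcal{H}$ keeps the grid secure at the end of each of her turns, which proves the lemma for $m\geq3$.
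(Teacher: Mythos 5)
Your overall skeleton---maintain the security invariant, split on whether $v_1$ and $v_2$ lie in zero, one or two existing red components, and check the $4+b$ budget subcase by subcase---is exactly the paper's. The genuine divergence is at what you call the combinatorial heart. You propose to classify, for each ordered pair of the five DualBracket types, which alignments across the shared red edge are realisable, and to exhibit a repair for each, transplanting the Case 3b table from the triangular-grid lemma. The paper does not do this. It instead leans on the Remark that every DualBracket of a component $C$ is obtained canonically from the leftmost lowest dual-vertex of $C$: the three lowest external boundary edges at that vertex together with a $3$-edge path along the remaining exterior edges. This gives a uniform recipe for both Case 2d and the bracket-meets-bracket part of Case 3b: the new component has at most $10$ non-red external boundary edges, six of them (the three lowest plus the next three to the right) form a fresh DualBracket by construction, and $\mathcal{H}$ claims the other four to extend the blue path. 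Your pairwise table would in principle reach the same conclusions, but with five $6$-edge types the enumeration is far heavier than the triangular case and, as you yourself anticipate, is where errors would hide; the Remark is precisely what makes the hexagonal case tractable, and a proof that omits it should at least prove the corresponding structural fact before launching the enumeration.

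There is also one concrete point at which your sketch, as written, would fail. In the merge cases you plan to exhibit ``the single extra edge $f$'' and check that the merged component lies inside $P\cup P'\cup\{f\}$. That arithmetic is inherited from $\Delta_{\infty,n}$, where brackets have $3$ edges and one additional blue edge closes the securing cycle. On $H_{n,m}$ the merged component's external boundary has up to $10$ non-red edges, of which only $6$ can serve as the new DualBracket; $\mathcal{H}$ must therefore claim $4$ new blue edges (not $1$) so that $P\cup B$ closes into a cycle around the component and every blue path edge remains dual-adjacent to a vertex of the component, as conditions (iii) and (iv) of the secure-floating-component definition require. With a single extra edge the securing cycle does not close. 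Your budget ($4+b$, rising to $5$ or $6$ in Cases 3a and 3b) does cover the correct count, so the repair is only a matter of claiming the right set of edges---again exactly what the Remark supplies. Finally, the closing appeal to the one-turn-win lemma is unnecessary here: the secure game's winning condition is only that the grid be secure after each of $\mathcal{H}$'s turns, and the one-turn-win lemma is invoked later, when the secure-game strategy is converted into a strategy for the $q$-$4$response game.
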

\begin{proof}
	Suppose the grid is secure before the beginning of $\mathcal{V}$'s turn and let $\mathcal{V}$ claim edge $e$ where $e*$ is adjacent to dual-vertices $v_1$ and $v_2$.\\
	
	\textbf{Case 1.} Let $v_1$ and $v_2$ not be part of any existing red components.\\
	If neither $v_1$ or $v_2$ are top or bottom vertices, then with dualbrackets Type 1,2 or 3 (depending on orientation of $v_1$ and $v_2$ as the interior dual-vertices) $\mathcal{H}$ can secure the new component with the 4 other edges whose duals are adjacent to $v_1$.
	If WLOG $v_1$ is a top (/bottom) dual-vertex, then either:
	\begin{itemize}
		\item $v_1$ and $v_2$ have different horizontal labels, in which case the dual edge can be extra secured by the 4 edges adjacent to $v_2$.
		\item or $v_1$ and $v_2$ have equal horizontal label, in which case the the dual edge can be secured by the 4 left most edges whose dual is adjacent to $v_2$ with the last one as the gate.
	\end{itemize}

	\textbf{Case 2.} Let $v_1$ be part of an existing red component $C$ and $v_2$ be not. Let $C$ be secured by blue path $P$ and bracket of non-red edges $B$.\\
	
	Case 2a. If $e$ lies in the interior then $C$ is still secure.\\
	
	Case 2b. If $e$ is a blue edge then by the conditions of the game $v_2$ cannot be a bottom or top vertex and $\mathcal{H}$ can claim 5 edges. She claims the non-red edges $f_1$, $f_2$, $f_3$, $f_4$ and $f_5$ whose dual are adjacent to $v_2$ to 're-secure' the component with the the new blue path $P-{e}+{f_1,f_2,f_3,f_4,f_5}$ and the same bracket $B$. \\
	
	Case 2c. If $e$ is the 'gate' of a top (or bottom) component $C$ then by the required position of the gate, the component can be made extra secure by $\mathcal{H}$ choosing the 4 unselected edges whose duals are adjacent to $v_2$.
	
	Case 2d. If $e$ is part of the dualbracket $B$ securing the existing red component then we consider the external boundary edges of the component.  $C \cup {e}$ has at most 10 external boundary edges. A new dualbracket can be found by finding the leftmost lowest dual-vertex of the {interior dual-vertices of $B$, $v_2$} and including the lower 3 external boundary edges of this vertex then selecting the next three external boundary edges right of this lowest dual-vertex. By the remark following the dualboundary definitions this is a dual bracket. $\mathcal{H}$ selects the other 4 external boundary edges to secure the component.

	\textbf{Case 3.} Let $v_1$ and $v_2$ be part of existing components $C_1$ and $C_2$ . Let $C_i$ be secured by blue path $P_i$ and either dualbracket of non-red edges $B_i$ or gate of non-red edge $g_i$ if $C_i$ is not secure. \\
	
	Clearly $C_1$ cannot equal $C_2$ else $e*$ creates a dual cycle in that connected component which breaks rule (i).\\
	$C_1$ and $C_2$ cannot both be top or bottom components else $e$ either creates a dual arch or connects a top and bottom component contradicting conditions (i) and (ii) respectively of the secure game.\\
	
	Case 3a. $C_1$ is a top or bottom component and $C_2$ is a floating component. By restriction (iii) $e$ must be in the dualbracket $B_2$. If $e$ is part of the blue path $P_1$ then player $\mathcal{H}$ has 5 moves and chooses the other 5 edges of $B_2$ to make $C_1 \cup C_2 \cup {e*}$ a secure top or bottom component. If $e$ is the gate $g_1$ then by trivial observation we can see that $e$ cannot be part of dualbracket $B_2$ for any dualbracket type.\\

	Case 3b. $C_1$ and $C_2$ are both floating components. If $e$ is part of both $P_1$ and $P_2$, then player $\mathcal{H}$ has 6 moves and uses them to claim all edges of dualbracket $B_1$  re-securing the component $C_1 \cup C_2$ with $P_1 \cup P_2 \cup B_1 \backslash e$ and dualbracket $B_2$. If $e$ is part of $P_1$ and $B_2$, then player $\mathcal{H}$ has 5 moves and uses them to claim the remaining 5 edges of bracket $B_2$ re-securing the component $C_1 \cup C_2$ with $P_1 \cup P_2 \cup B_2 \backslash e$ and $B_1$.\\
	Finally if $e$ is part of $B_1$ and $B_2$, then the set $B_1 \cup B_2 \backslash e$ consists of at most 10 non-red edges in one connected component. Consider the lowest interior dual-vertices of $B_1$ and $B_2$, $v'$. Without loss of generality let $v'$ belong to $C_1$. Then by the structure of the grid $e$ is not in the 3 lower edges of $B_1$. Following the remark after the definition of dualbrackets, we take the new bracket to be these 3 lower edges and the 3-edge-path from the right endpoint those lower edges. By construction this is a dual bracket. $\mathcal{H}$ claims the other 4 edges to re-secure the component. 
	
	Hence, at the end of player $\mathcal{H}$'s turn, the board is always secure. Hence $\mathcal{H}$ has a winning strategy for the secure game on $\Delta_{\infty,n}$ for $n\geq2$.
\end{proof}

\begin{thm}
	DualMaker has a winning strategy for the q-4response game on $H_{n,m}$ for $m\geq q+1$
\end{thm}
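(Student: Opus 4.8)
The plan is to run the same argument that proves Theorem 3.2 for the $q$-response game on $\Delta_{\infty,n}$, now feeding in the two hexagonal ingredients established above: the single-turn impossibility lemma for the $q$-4response game, and the winning strategy for the secure game on $H_{n,m}$. Since the $(p,4p)$-crossing game is the special case $r_t=q$ of the $q$-4response game, and since the horizontal player $\mathcal{H}$ here is precisely DualMaker, it suffices to exhibit a strategy for $\mathcal{H}$ that leaves the board \emph{secure} at the end of every one of her turns. Given such a strategy, the single-turn lemma guarantees that a secure board cannot be turned into a top-bottom red crossing by $\mathcal{V}$ on his next move, and since $H_{n,m}$ is finite the play terminates with no red crossing, i.e. with DualMaker holding the desired horizontal crossing.

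First I would describe $\mathcal{H}$'s move. Assume the board is secure when $\mathcal{V}$ claims $r\le q$ red edges $A=\{e_1,\dots,e_r\}$. $\mathcal{H}$ processes these one at a time and responds to each exactly as the secure-game strategy on $H_{n,m}$ prescribes. The move budget matches: in the secure game $\mathcal{H}$ answers a red edge overwriting $b$ blue edges with $4+b$ edges, while the $q$-4response game grants her $4r$ edges for $\mathcal{V}$'s $r$ edges. In the $q$-4response game $\mathcal{V}$ only claims unclaimed edges, so the only overwrites occurring during the simulation are when one of $\mathcal{V}$'s own later edges coincides with an edge $\mathcal{H}$ had already claimed while answering an earlier $e_i$; that move is then refunded and reused, exactly as in the proof of Theorem 3.2. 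Hence the $4r$ responses suffice (possibly with unused moves left over), and the board is secure again once all of $A$ has been processed.

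The step I expect to be the main obstacle is verifying that $\mathcal{V}$'s simultaneous play never confronts $\mathcal{H}$ with a configuration outside the secure-game case analysis, i.e. that conditions (i)--(iii) of the secure game are respected throughout the simulation. Conditions (i) and (ii) transfer verbatim: $\mathcal{V}$ never creates a red dualcycle or dualarch by the standing assumptions, and any move joining a top to a bottom component would be a one-turn win forbidden by the single-turn lemma. Condition (iii) is the delicate one, because $\mathcal{H}$ must choose the order in which to answer the edges of $A$. I would order $A$ so that, inside each top or bottom component (a tree rooted at its unique top/bottom dual-vertex by (i) and (ii)), edges nearer the root are answered first. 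Then, were some $e_i$ to convert a secured floating component $C$ into a top or bottom component, no edge of $A$ incident to $C$ could have been played before $e_i$, so the entire securing blue path of $C$ predates $\mathcal{V}$'s turn; but that is precisely the situation in which claiming $e_i$ is illegal in the genuine $q$-4response game, contradicting that $\mathcal{V}$ did claim it. This is the only genuinely new verification, and it is structurally identical to the corresponding step for $\Delta_{\infty,n}$.

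Assembling these, $\mathcal{H}$ can restore security after every turn, a secure board denies $\mathcal{V}$ a crossing on the following move, and finiteness of $H_{n,m}$ forces termination in $\mathcal{H}$'s favour, so DualMaker wins the $q$-4response game and hence the $(p,4p)$-crossing game. The one caveat I would record is the height requirement: the single-turn lemma needs $m\ge 1.5q+2$, which is stronger than the $m\ge q+1$ appearing in the statement, so I would either strengthen the hypothesis to $m\ge 1.5q+2$ or first sharpen the single-turn lemma before asserting the bound $m\ge q+1$.
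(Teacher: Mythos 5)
Your proposal follows exactly the route the paper takes: the paper's entire proof of this theorem is the one-line remark that one applies the same conversion from the secure game on $H_{n,m}$ to the $q$-4response game as was used to pass from the secure game on $\Delta_{\infty,n}$ to the $q$-response game, which is precisely the argument you spell out in detail (same edge-by-edge simulation, same refund accounting, same root-first ordering to handle condition (iii)). Your closing caveat is also well taken and is not addressed by the paper: the single-turn lemma is stated under the hypothesis $m \geq 1.5q+2$, which does not match the $m \geq q+1$ appearing in the theorem, so either the lemma's bound must be sharpened or the theorem's hypothesis strengthened.
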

We apply exactly the same method to change the secure game strategy on $H_{n,m}$ into a q-4response game as that from the secure game on $\Delta_{m,n}$ into the q-response game.
The proof is hence analogous.

\section*{Acknowledgement}
This project was carried out with the support of The University of Warwick's Undergraduate Research Support Scheme (URSS). A particular thanks to Professor Oleg Pikhurko for his useful and stimulating discussions.
\bibliography{biblio}
		
\end{document}